\pgfplotsset{compat=1.14}
\newcounter{custom}
\numberwithin{custom}{section} 
\let\oldsubsection\subsection
\renewcommand{\subsection}{\stepcounter{custom}\oldsubsection}
\newtheorem{theorem}[custom]{Theorem}
\newtheorem{lemma}[custom]{Lemma}
\newtheorem{remark}[custom]{Remark}
\newtheorem{definition}[custom]{Definition}
\renewcommand{\vec}[1]{\ensuremath{\boldsymbol{#1}}}
\title[]{Cycle-Free Polytopal Mesh Sweeping for Boltzmann Transport}
\author[Calloo, A. et al.]{Ansar Calloo$^{1}$, Matthew Evans $^{2*}$, Henry Lockyer $^{2}$, Fran\c{c}ois Madiot$^{3}$, Tristan Pryer$^{2,4}$, Luca Zanetti$^{2,4}$}
\address{$^1$Universit\'e Paris-Saclay, CEA, Service de G\'enie Logiciel pour la Simulation, 91191, Gif-sur-Yvette, France}
\address{$^2$Mathematical Sciences, University of Bath}
\address{$^3$Universit\'e Paris-Saclay, CEA, Service d'\'Etudes des R\'eacteurs et de Math\'ematiques Appliqu\'ees, 91191, Gif-sur-Yvette, France}
\address{$^4$Institute of Mathematical Innovation, University of Bath}
\email{$^*$\url{mje45@bath.ac.uk}}
\date{}
\begin{document}

\bibliographystyle{acm}

\begin{abstract}
  We introduce a novel property of bounded Voronoi tessellations that
  enables cycle-free mesh sweeping algorithms. We prove that a
  topological sort of the dual graph of any Voronoi tessellation is
  feasible in any flow direction and dimension, allowing
  straightforward application to discontinuous Galerkin (DG)
  discretisations of first-order hyperbolic partial differential
  equations and the Boltzmann Transport Equation (BTE) without
  requiring flux-cycle corrections.

  We also present an efficient algorithm to perform the topological
  sort on the dual mesh nodes, ensuring a valid sweep ordering. This
  result expands the applicability of DG methods for transport
  problems on polytopal meshes by providing a robust framework for
  scalable, parallelised solutions. To illustrate its effectiveness,
  we conduct a series of computational experiments showcasing a DG
  scheme for BTE, demonstrating both computational efficiency and
  adaptability to complex geometries.
\end{abstract}

\maketitle

\section{Introduction}
\label{section:introduction}

\subsection{Motivation}

The Boltzmann transport equation (BTE) is an integro-differential
equation initially developed to model particle distributions in
non-equilibrium thermodynamic systems \cite{Cercignani:1988}. Today,
the BTE is central to numerous applications, particularly in radiation
transport and neutronics, where it underpins simulations in nuclear
reactor design, including modern molten salt reactors
\cite{BetzlerPowersWorrall:2017}, as well as radiotherapy techniques
\cite{CoxHattamKyprianouPryer:2023,CrossleyHabermannHortonKoskelaKyprianouOsman:2024}. These
applications rely on robust numerical methods to address the
complexity of the BTE, which arises from its integro-differential
structure and high dimensionality. Despite the theoretical challenges
associated with proving the existence and uniqueness of solutions,
numerical methods for the BTE have been a focus of research for over a
century \cite[c.f]{Hilbert:1904,LewisMiller:1984}.

The discrete ordinates method (DOM) is one of the primary techniques
for approximating solutions to the BTE in radiation transport
\cite{chandrasekhar1960radiative}. DOM discretises the angular
variables to approximate the directions of particle flow and, in the
mono-energetic case, yields a spatial-temporal equation that can be
formulated into a large linear system. For certain spatial
discretisations, this system can be transformed into a lower
triangular form, which enables an efficient mesh-sweeping solution
\cite{AzmySartoriLarsenMorel:2010}. The sweeping process itself
involves solving the triangular system in a sequential, directional
manner, where each step depends only on previously solved values.

However, not all mesh configurations support this lower triangular
transformation. Structured, regular grids are typically amenable to
such sweeping techniques, but unstructured or irregular meshes often
introduce cyclic dependencies that prevent a straightforward
transformation, thereby obstructing the sweep \cite{plimpton2005}.

Furthermore, fine discretisations generate substantial numbers of
unknowns, making parallelisation essential to manage computational
load and reduce solution times. Achieving efficient parallelisation,
however, depends heavily on mesh compatibility with sweep ordering, as
incompatible meshes can significantly hinder parallel efficiency. This
challenge motivates the development of mesh types and algorithms
specifically designed to allow sweeping on complex geometries in a
parallelised setting, maximising computational performance on modern
architectures
\cite{AdamsAdamsHawkinsSmithRauchwergerAmatoBaileyFalgout:2013,Adams_2020}.

In recent years, discontinuous Galerkin (DG) methods on polygonal
meshes have garnered significant attention due to their flexibility in
handling complex geometries, high-order accuracy and their suitability
for adaptive mesh refinement, which is important for resolving
localised features within transport problems
\cite{CangianiGeorgoulisHouston:2014}. Additionally, DG methods on
polygonal meshes allow for more efficient mesh generation in irregular
domains and can reduce computational costs by minimising the number of
required elements. Despite these advantages, applying DG methods to
transport problems in a sweeping framework remains challenging, as
there are currently no established results on achieving efficient
sweeping for DG methods on polytopal meshes. This limitation poses
difficulties for extending these DG approaches to sweeping-based,
parallelised solutions, particularly for large-scale, complex
geometries.

\subsection{Our Contribution}

This work addresses the challenge of enabling efficient sweeping
algorithms on unstructured polytopal meshes, providing the first
rigorous demonstration that a specific family of such meshes, those with a Voronoi structure, can
support systematic sweeping in any transport direction without
inducing cyclical dependencies in the underlying numerical
system. This result represents an advancement in the use of polytopal
meshes for transport problems, opening the door to leveraging DG
methods on polytopal elements in mesh-sweeping frameworks.

Our approach includes the development of a specialised scheduling
algorithm that effectively orders elements in a sweep-compatible
manner, ensuring that dependencies are resolved sequentially across
the mesh. This enables mesh-sweeping without the need for
cycle-breaking modifications in the solver, which are typically
required for unstructured or irregular meshes, enhancing computational
efficiency. This makes it feasible to extend sweeping-based,
parallelised solutions to complex and large-scale geometries that
require adaptive meshing and localised refinement.

Our results establish a framework that allows DG methods on polytopal Voronoi
meshes to be implemented efficiently in parallel settings. This
maximises computational performance on modern architectures and
addresses the scalability challenges that arise when fine
discretisations generate substantial numbers of unknowns. This work
represents a significant step toward practical and efficient BTE
simulations on complex geometries using DG methods on more general polytopal
elements.

\subsection{Related Literature}

The DOM, though powerful, was historically
limited by its sequential transport sweep, which was initially deemed
computationally prohibitive. The Koch-Baker-Alcouffe (KBA) method
\cite{baker1998sn} introduced parallelisation for transport sweeps on
high-performance computing architectures. Further enhancements, such
as the simultaneous multi-octant sweep
\cite{AdamsAdamsHawkinsSmithRauchwergerAmatoBaileyFalgout:2013} and
extensions to Krylov-based iterative solvers
\cite{doi:10.13182/NT171-171}, have advanced the efficiency of neutron
transport simulations. Techniques involving aggregation on
orthogonal-like meshes have demonstrated substantial performance gains
on modern architectures \cite{osti_1312619, osti_877776}. However,
these approaches typically rely on structured or orthogonal meshes,
which are less suited to the irregular geometries commonly encountered
in practical applications like neutronics.

Recent efforts have explored methods for mesh sweeping on irregular
and unstructured meshes, particularly simplicial meshes. For example,
sweeping feasibility has been demonstrated on regular 2D meshes
without hanging nodes \cite{camminady2018short} and topological
sorting with cycle-breaking has been applied to irregular 2D
triangulations \cite{vermaak2021}. Finite element solvers have also
been developed for irregular tetrahedral meshes with reduced storage
requirements, though these methods lack flexibility for adaptive
meshing frameworks such as {\it hp}-DG
\cite{CangianiGeorgoulisHouston:2014} or recovered finite elements
\cite{PolyRFEM}.

Polytopal meshes, due to their adaptability and efficient
representation of complex domains, have garnered attention as
alternatives to traditional structured meshes. Distorted hexahedral
meshes have been presented for cubic and spherical domains
\cite{GAO2022110964}, while semi-structured polyhedral meshes with
optimised parallel sweeps have been introduced to improve processor
efficiency \cite{Adams_2020}. Additional domain decomposition
strategies for enhanced processor efficiency have also been explored
\cite{COLOMER2013118, vermaak2021}.

The potential of DG methods on polygonal and polyhedral meshes has
also attracted recent interest due to their high-order accuracy,
adaptability for complex geometries and efficiency in reducing the
number of elements for irregular domains
\cite{AntoniettiCangianiCollisDongGeorgoulisGianiHouston:2016,ManziniRussoSukumar:2014}.
Recent work has further developed iterative solution techniques for
{\it hp}-DG schemes applied to the poly-energetic BTE, including
preconditioned Richardson and GMRES-based methods that yield mesh and
polynomial independent error estimates, enhancing their robustness and
efficiency \cite{HoustonHubbardRadley:2024}.

However, adapting DG methods to transport
problems with mesh-sweeping algorithms presents significant
challenges, particularly because sweeping requires a lower triangular
form that is often difficult to achieve on unstructured or polytopal
meshes. While methods such as the ``number-of-upstreams'' approach
permit cell evaluation when all upstream boundary conditions are
satisfied \cite{plimpton2005}, cycle-breaking strategies are often
necessary to manage dependencies on irregular meshes, as in
steady-state sweeps or time-lagged approaches for time-dependent
sweeps \cite{GAO2022110964, vermaak2021}. These modifications
introduce inefficiencies, limiting the application of sweeping in DG
settings on complex geometries.

Our work circumvents these limitations by establishing a class of
sweep-compatible polytopal meshes that require no cycle-breaking
adjustments, thus enabling the direct application of sweeping
algorithms in broader, more complex geometries. This contribution is
the first to rigorously demonstrate sweep-compatible polytopal meshes,
enhancing the applicability of polygonal DG and finite volume methods
in complex domains, and providing a robust framework for efficient and
cycle-free solutions to transport equations on irregular meshes.

The structure of this paper is as follows: In
Section~\ref{section:problem_setting}, we introduce notation, define
the underlying meshes and provide background on sweeping the DG method
in a simplified linear transport setup. Section~\ref{sec:BTE} presents
the DOM for the BTE and summarises
relevant existing results. In Section~\ref{sec:main_contributions}, we
introduce the necessary graph-theoretic concepts to show our main
result. Section~\ref{section:numerics} showcases numerical experiments
that assess the algorithm's robustness, computational complexity,
solution accuracy and applicability to a reactor core test case. We
conclude in Section \ref{section:summary}.

\section{Problem setup}
\label{section:problem_setting}

Let $\leb p(D)$, $1\le p\le \infty$ denote the standard Lebesgue
spaces for an open subset $D\subset\mathbb{R}^d$, $d=1,2,3$, with
corresponding norms $\|\cdot\|_{L^p(D)}$. We then introduce the
Sobolev spaces \cite{evans2022partial}
\begin{equation}
  \sobh{k}(D) 
  := 
  \ensemble{\phi\in\leb{2}(D)}
  {\D^{\vec\alpha}\phi\in\leb{2}(D), \text{ for } \norm{\geovec\alpha}\leq k},
\end{equation}
which are equipped with norms and semi-norms
\begin{gather}
  \Norm{u}_{\sobh{k}(D)}^2 
  := 
  \sum_{\norm{\vec \alpha}\leq k}\Norm{\D^{\vec \alpha} u}_{\leb{2}(D)}^2 
  \AND \norm{u}_{k}^2 
  :=
  \norm{u}_{\sobh{k}(D)}^2 
  =
  \sum_{\norm{\vec \alpha} = k}\Norm{\D^{\vec \alpha} u}_{\leb{2}(D)}^2
\end{gather}
respectively, where $\vec\alpha = \{ \alpha_1,...,\alpha_d\}$ is a
multi-index, $\norm{\vec\alpha} = \sum_{i=1}^d\alpha_i$ and
derivatives $\D^{\vec\alpha}$ are understood in a weak sense.

Let $\W \subset \mathbb{R}^d$, $d\ge2$ be a bounded, polytopal domain
with boundary $\Gamma$. Let $\vec{\nu} = \vec{\nu}(\vec{x})$ denote the outward pointing
normal of $\partial \Omega$ consisting of in-flow and out-flow
boundaries
\begin{equation}
  \label{in-out-flow}
  \begin{split}
    \Gamma_{-}
    &=
    \{\vec{x} \in \partial \Omega : \vec{\omega} \cdot \vec{\nu}(\vec{x}) < 0\}
    \quad (\text{in-flow}) \\
    \Gamma_{+}
    &=
    \{\vec{x} \in \partial \Omega : \vec{\omega} \cdot \vec{\nu}(\vec{x}) > 0\}
    \quad (\text{out-flow}).
  \end{split}
\end{equation}
Let $\vec \omega\in\mathbb R^d$ and $\Sigma_t \in\leb{\infty}(\W)$ be
given $f\in L^2(\Omega)$ and $g_D\in L^2(\Gamma_-)$. We consider
the boundary value problem to seek $u$ such that
\begin{equation}
  \label{transport:equation}
  \begin{split}
    \vec{\omega} \cdot \nabla u + \Sigma_t u &= f
    \text{ in } \Omega,
    \\
    u &= g_D \text{ on } \Gamma_{-}.
  \end{split}
\end{equation}
We make the standard assumption that there exists $\sigma_0$ such that
\begin{equation}
  \Sigma_t(\vec x)  \geq \sigma_0 > 0 \text{ for a.e. } \vec x\in\Omega,
\end{equation}
which is sufficient to guarantee the problem is well posed \cite{houston2001stabilised}.

\subsection{Polytopal Voronoi Meshes}
\label{subsection:polytopal_meshes}

We define a polytopal mesh (and Voronoi tessellation) over
$\Omega$ in a similar fashion to the definitions in
\cite{di2020hybrid, PolyRFEM}.

\begin{definition}[Polytopal Mesh]
  \label{definition:polytopal_mesh}  
  A {\it polytopal mesh} over $\Omega$ is characterised by a pair
  $(\mathcal{T}, \mathcal{E})$, such that:
  \begin{enumerate}
  \item The {\it element set}, $\mathcal{T}$, is a subdivision of
    $\Omega$ into disjoint $d$-dimensional polytopes such that:    
    \begin{equation}
      \overline{\Omega} = \bigcup_{T\in\mathcal{T}} \overline{T}.
    \end{equation}
  \item The {\it facet set}, $\mathcal{E}$, is the set of all
    $(d-1)$-dimensional facets associated with the subdivision,
    $\mathcal{T}$, including the boundary, $\partial\Omega$. We note
    that the elements of $\mathcal{E}$ form a partition of the mesh
    skeleton defined by $\bigcup_{T\in\mathcal{T}} \partial T$.
  \end{enumerate}
  Furthermore, for any mesh element $T \in \mathcal{T}$, let
  \begin{equation}
    \mathcal{E}_T := \{ F \in \mathcal{E} \mid F \subset \partial T \}
  \end{equation}
  denote the set of facets contained in $\partial T$. We further
  define $\mathcal{E}_{T_1, T_2} = \mathcal{E}_{T_1} \cap
  \mathcal{E}_{T_2}$ to be the set containing the facet(s) (if
  non-empty) between two mesh elements, $T_1$ and $T_2$. We slightly
  abuse notation by using $\mathcal{E}_{T_1, T_2}$ to refer to both
  the set containing the facet(s) and the facet itself if there is
  only one. This concept naturally extends to the boundary facets
  through
  \begin{equation}
    \mathcal{E}_{T, \partial\Omega} = \{ F \in \mathcal{E}_T \mid F \subset \partial \Omega \}.
  \end{equation}
  We define the neighbouring set as the set of elements that
  share a facet with a given element $T$:
  \begin{equation}    
    \mathcal{N}_T := \{ T' \in \mathcal{T} \mid \mathcal{E}_{T, T'} \neq \emptyset \}.    
  \end{equation}  
  We also define $\vec{\nu}_{T}(\vec{x})$ to be the outward
  pointing unit normal vector to $\partial T$ at $\vec{x} \in \partial
  T$ for $T \in \mathcal{T}$, and $\vec{\nu}_{T_1, T_2} :=
  \vec{\nu}_{1,2}$ to be the outward pointing unit normal vector to
  $\partial T_1$ defined over the element(s) in $\mathcal{E}_{T_1,
    T_2}$.
\end{definition}
    
\begin{definition}[Voronoi Tessellation]
  \label{definition:voronoi_tessellation} 
  A {\it Voronoi tessellation} over a $d$-dimensional polytopal
  domain $\Omega$ is a polytopal mesh $(\mathcal{T}, \mathcal{E})$
  where each element $T_i \in \mathcal{T}$ has an associated centre,
  $\vec{v}_i \in \mathbb{R}^d$, such that $T_i$ consists of the points
  which are strictly closest to its corresponding centre $\vec{v}_i$
  or simply written:
  \begin{equation}
    T_i = \{\vec{x} \in \Omega : \|\vec{x} - \vec{v}_i\|_2 < \|\vec{x} - \vec{v}_j\|_2 \text{ for all } j \ne i\}.
  \end{equation}
\end{definition}

\begin{remark}[Convexity of Voronoi elements]
  We note that the convexity of the elements of a Voronoi tessellation
  implies that the facet set between two elements has at most one
  element.
\end{remark}

\subsection{Discontinuous Galerkin method for linear transport}
\label{subsection:fvms_transport}

To motivate the use of mesh sweeping algorithms, we introduce a DG
approximation of \eqref{transport:equation}
\cite{CangianiGeorgoulisHouston:2014}. Let $(\mathcal{T},
\mathcal{E})$ be a polytopal mesh.

For $p \in \naturals_0$ and an element $T\in\mathcal T$ we denote the
set of polynomials of total degree at most $p$ by $\poly{p}(T)$ which
allows us to define the discontinuous Galerkin finite element space
\begin{equation}
  \fes_p := \ensemble{v_h \in \leb{2}(\W)}{v_h\vert_{T} \in \poly{p}(T) \Foreach T\in\mathcal T}.
\end{equation}
Let $T_1, T_2 \in \mathcal T$ be two elements sharing a facet
$\mathcal E_{T_1, T_2}$, where $T_1$ is upwind of $T_2$. For a function $v:\Omega\to\mathbb R$ we define the jump
and upwind jump operators
\begin{equation}
    \jump{v}
    =
    v\vert_{T_1} \vec \nu_{T_1}
    +
    v\vert_{T_2} \vec \nu_{T_2},
    \qquad
    \ujump{v}
    =
    v\vert_{T_1}
    -
    v\vert_{T_2}      
\end{equation}
respectively, noting that they can differ by at most a sign. Let $h_T
:= \diam{T}$ and $h \in \fes_0$ to be the piecewise constant mesh size
function such that $h|_T = h_T$.

For a $T \in \mathcal T$ we define
\begin{equation}
  \begin{split}
    \partial_- T
    &=
    \{\vec{x} \in \partial T : \vec{\omega} \cdot \vec{\nu}(\vec{x}) < 0\}
    \\
    \partial_+ T
    &=
    \{\vec{x} \in \partial T : \vec{\omega} \cdot \vec{\nu}(\vec{x}) > 0\}
    \\
    \partial_0 T
    &=
    \{\vec{x} \in \partial T : \vec{\omega} \cdot \vec{\nu}(\vec{x}) = 0\}
\end{split}
\end{equation}
as elementwise inflow, outflow and characteristic components of the
element. For a non-degenerate polygon, at least one face is an inflow and one is an outflow.

Now we define the discontinuous Galerkin method to find $u_h \in
\fes_p$ such that
\begin{equation}
  \label{eq:dgmethod}
  \bi{u_h}{v_h}{\vec \w} = l_{\vec \w}(v_h) \Foreach v_h \in \fes_p
\end{equation}
where
\begin{equation}
  \bi{u_h}{v_h}{\vec w}
  :=
  \sum_{T\in\mathcal T}
  \qp{\int_T \qp{\vec \w \cdot \nabla u_h + \Sigma_t u_h} v_h \d \vec x
    -
    \int_{\partial_- T \slash \Gamma}
    \vec \w \cdot \vec \nu \ujump{u_h} v_h \d s
    -
    \int_{\partial_- T \cap \Gamma_-} \vec \w \cdot \vec \nu u_h v_h
  }
\end{equation}
and
\begin{equation}
  l_{\vec \w}(v_h)
  :=
  \int_\W
  f v_h
  -
  \int_{\Gamma_-} \vec \w \cdot \vec \nu g_D v_h.
\end{equation}
A natural notion of error is the energy norm defined by
\begin{equation}
    \label{eqn:dg_energy_norm}
  \enorm{u_h}^2
  :=
  \sum_{T\in \mathcal T}
  \qp{
  \Norm{\sigma_0^{1/2} u_h}_{\leb{2}(T)}^2
  +
  \frac 12
  \Norm{{\norm{\vec w \cdot \vec \nu}}^{1/2} \ujump{u_h}}_{\leb{2}(\partial_- T \slash \Gamma)}^2
  +
  \frac 12
  \Norm{{\norm{\vec w \cdot \vec \nu}}^{1/2} {u_h}}_{\leb{2}(\partial_- T \cap \Gamma_-)}^2
  }.
\end{equation}

\begin{lemma}[Error control]
  If $u\in\sobh r(\W)$, $1\le r\in\mathbb{N}$, then a priori error
  control in this norm is given by
  \begin{equation}
    \enorm{u - u_h}
    \leq
    C \max_T h_T^{\min\qp{p+1,r} - \tfrac 12} \norm{u}_{\sobh{r}(\W)}
  \end{equation}
  and, in general, no improvement is anticipated in the weaker
  $\leb{2}$-norm
  \cite{bey1996hp,johnson1986analysis,lesaint1974finite}. One can also
  show results in a more general class of solutions
  \cite{suli1997posteriori}, however we do not further investigate
  this here.
\end{lemma}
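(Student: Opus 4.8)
The plan is to establish this a priori bound by the classical C\'ea/Strang argument for discontinuous Galerkin discretisations of first-order hyperbolic problems, which rests on three ingredients: coercivity of $\bi{\cdot}{\cdot}{\vec\omega}$ in the energy norm, Galerkin orthogonality, and elementwise polynomial approximation estimates on the polytopal mesh.

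First I would establish coercivity. Since $\vec\omega$ is constant, I integrate the streaming term by parts on each element, $\int_T (\vec\omega\cdot\nabla v_h) v_h = \tfrac12\int_{\partial T}(\vec\omega\cdot\vec\nu) v_h^2$, and reassemble the resulting inflow-facet and boundary contributions together with the reaction term, using $\Sigma_t v_h^2 \ge \sigma_0 v_h^2$. This yields
\begin{equation}
  \bi{v_h}{v_h}{\vec\omega} \ge \enorm{v_h}^2 \qquad \text{for all } v_h\in\fes_p,
\end{equation}
identifying \eqref{eqn:dg_energy_norm} as the natural stability norm.

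Next, because the exact solution of \eqref{transport:equation} is consistent with the scheme, subtracting \eqref{eq:dgmethod} gives Galerkin orthogonality, $\bi{u-u_h}{v_h}{\vec\omega}=0$ for all $v_h\in\fes_p$. I would split $u-u_h = \eta + \xi$ with $\eta := u - \Pi u$ and $\xi := \Pi u - u_h \in \fes_p$, where $\Pi u$ is a suitable elementwise projection onto $\fes_p$. Coercivity and orthogonality then give $\enorm{\xi}^2 \le \bi{\xi}{\xi}{\vec\omega} = -\bi{\eta}{\xi}{\vec\omega}$, and bounding the right-hand side by continuity of the bilinear form (integrating the streaming term by parts onto the polynomial $\xi$ and applying Cauchy--Schwarz together with inverse estimates) produces $\enorm{\xi} \lesssim \Norm{\eta}_{*}$ in a mesh-dependent norm controlling the volume, streaming and trace contributions of $\eta$. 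A triangle inequality then reduces the estimate to bounding the projection error $\eta$ alone.

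Finally I would insert the standard polynomial approximation estimates: on each element $\Norm{\eta}_{\leb2(T)}\lesssim h_T^{\min(p+1,r)}\norm{u}_{\sobh r(T)}$, while the facet contributions appearing in $\enorm{\cdot}$ and in $\Norm{\cdot}_{*}$ pick up an extra negative half-power through the trace inequality $\Norm{\eta}_{\leb2(\partial T)} \lesssim h_T^{-1/2}\Norm{\eta}_{\leb2(T)} + h_T^{1/2}\Norm{\nabla\eta}_{\leb2(T)}$, giving the rate $h_T^{\min(p+1,r)-1/2}$; since this dominates the volume rate, taking the maximum over elements yields the claim. The main obstacle is this final step on genuine Voronoi polytopes: the trace, inverse and approximation estimates are classical on shape-regular simplices (as in the cited works) but must be secured uniformly on general polygonal and polyhedral elements, typically by imposing shape-regularity of the tessellation or passing to a compatible sub-triangulation, so that the constants --- and in particular the dominant $-\tfrac12$ power, which also explains the anticipated suboptimality in $\leb2$ --- are controlled independently of the mesh.
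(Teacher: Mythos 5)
The paper never proves this lemma: it is stated as a known result, with the proof deferred to the cited literature \cite{lesaint1974finite,johnson1986analysis,bey1996hp}. Your outline reconstructs precisely that classical energy argument --- coercivity of $\bi{\cdot}{\cdot}{\vec\omega}$ in $\enorm{\cdot}$, Galerkin orthogonality, the splitting $u-u_h=\eta+\xi$ with $\xi\in\fes_p$, and trace plus approximation estimates --- so the architecture is the right one and there is no paper proof to diverge from. There is, however, one step which, as you have literally written it, does not deliver the claimed rate.

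The issue is the volume streaming term. After integrating by parts onto $\xi$, you must control $\sum_{T}\int_T \eta\,(\vec\omega\cdot\nabla\xi)\,d\vec x$. The energy norm \eqref{eqn:dg_energy_norm} contains no gradient or streaming control of $\xi$, so the ``Cauchy--Schwarz together with inverse estimates'' route forces the bound $\Norm{\vec\omega\cdot\nabla\xi}_{\leb{2}(T)}\leq C h_T^{-1}\Norm{\xi}_{\leb{2}(T)}$; absorbing $\Norm{\xi}_{\leb{2}(T)}$ into $\enorm{\xi}$ by Young's inequality then leaves a term of size $h_T^{-2}\Norm{\eta}_{\leb{2}(T)}^2$, i.e.\ the overall rate $h^{\min(p+1,r)-1}$, which is half a power short of the lemma. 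The classical repair --- and the reason the cited works obtain the exponent $\min(p+1,r)-\tfrac12$ --- is to fix $\Pi$ as the elementwise $\leb{2}$-projection onto $\poly{p}(T)$: since $\vec\omega$ is constant, $\vec\omega\cdot\nabla\xi\vert_T\in\poly{p-1}(T)\subset\poly{p}(T)$, so this volume term vanishes identically by orthogonality, no inverse estimate is invoked, and the error is driven entirely by the facet contributions of $\eta$, which your trace inequality bounds at the rate $h_T^{\min(p+1,r)-1/2}$ as claimed. (The reaction term $\int_T\Sigma_t\,\eta\,\xi\,d\vec x$ is harmless at rate $h^{\min(p+1,r)}$, and your closing caveat is well placed: on genuine Voronoi elements the trace, inverse and approximation estimates all require polytopal shape-regularity hypotheses of the type assumed in \cite{CangianiGeorgoulisHouston:2014}.)
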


\subsection{Mesh Sweeping for Transport Problems}
\label{subsection:mesh_sweeping}

The DG method \eqref{eq:dgmethod} yields a system of equations that
can be expressed in an algebraic form. Specifically, it leads to a
linear system of the form
\begin{equation}
  \vec{A} \vec{u} = \vec{f},
\end{equation}
where $\vec{A}$ is the global system matrix, $\vec{u}$ is the
vector of unknowns corresponding to the DG coefficients of the
approximate solution, and $\vec{f}$ represents the source terms and
boundary conditions.

For each element $T \in \mathcal{T}$, the DG formulation
\eqref{eq:dgmethod} includes terms that depend on neighbouring elements
that are upwind in the transport direction $\vec{\omega}$. This
dependency structure is reflected in the global system matrix
$\vec{A}$, where entries corresponding to interactions between
neighbouring elements are determined by the orientation of
$\vec{\omega}$ relative to the outward normal vectors on element
interfaces.

To solve this system efficiently, we seek to reorder the mesh elements
to achieve a block lower triangular form. This reordering is
accomplished by applying a permutation matrix $\vec{P}$ that aligns
the elements according to the causality imposed by the transport
direction. The resulting reordered system,
\begin{equation}
  \vec{P} \vec{A} \vec{u} = \vec{P} \vec{f},
\end{equation}
yields a block lower triangular structure for \(\vec{P} \vec{A}\):
\begin{equation}
  \vec{P} \vec{A} = 
  \begin{bmatrix}
    \vec{A}_{11} & \vec{0} & \cdots & \vec{0} \\
    \vec{A}_{21} & \vec{A}_{22} & \cdots & \vec{0} \\
    \vdots & \vdots & \ddots & \vdots \\
    \vec{A}_{m1} & \vec{A}_{m2} & \cdots & \vec{A}_{mm}
  \end{bmatrix}.
\end{equation}
This lower triangular form ensures that the solution on each element
depends only on elements that are ``upstream'' in the propagation
direction. Consequently, we can use a forward substitution approach,
often called a ``sweep'', to compute the solution sequentially, with
each block solved independently once its dependencies have been
resolved.

Achieving a block lower triangular structure through reordering is
typically feasible only for structured or semi-structured meshes,
where element adjacency follows a predictable pattern. For general
unstructured meshes, cyclic dependencies in the mesh may prevent a
complete ordering that respects the transport direction. In such
cases, no reordering can yield a strictly lower triangular form
without additional modifications. These cycles may necessitate
alternative strategies, such as iterative solvers or cycle-breaking
techniques, which approximate the ideal dependency ordering without
achieving a fully block lower triangular matrix. We will explore
methods for handling these challenges in Section
\ref{sec:main_contributions}.

When a block lower triangular structure is achievable, the linear
system can be solved in a single forward pass, minimising the need for
iterative solution methods and reducing computational costs,
particularly in high-dimensional problems or on fine
discretisations. In cases where exact sweeping is infeasible,
approximate sweeping or iterative solvers with directional
preconditioning may provide an effective alternative, leveraging the
transport direction for accelerated convergence.

\section{Boltzmann Transport Equation}
\label{sec:BTE}

In the previous section, we developed a DG method for solving the
transport equation for a single direction, $\vec{\omega} \in
\mathbb{S}^{d-1}$. The BTE represents transport solutions across
multiple directions simultaneously, where both the spatial variable
$\vec{x}$ and the angular variable $\vec{\omega}$ are independent. In
this section, we reuse much of the ideas and notation, although extended
into a higher dimensional space.

To that end, let $\Sigma_s$ denote the differential scattering kernel
and $\Sigma_t$ the macroscopic total cross-section, both of which
depend on the spatial variable $\vec{x}$. We assume that there exists
a constant $\sigma_0 > 0$ such that
\begin{equation}
  \label{eq:coerc-assumption}
  \Sigma_t(\vec{x}) - \int_{\mathbb{S}^{d-1}} \Sigma_s(\vec{x}, \vec{\omega} \cdot \vec{\omega}') \, d\vec{\omega}' \geq \sigma_0 > 0 \quad \text{for a.e. } \vec{x} \in \Omega, \ \vec{\omega} \in \mathbb{S}^{d-1}.
\end{equation}
This coercivity assumption ensures the problem is well-posed by
providing a bound on the scattering that allows the existence of a
unique solution.

We focus on the mono-energetic form of the BTE, which is given by the
following integro-differential equations
\begin{equation}
  \label{eqn:bte}
  \begin{split}
    \vec{\omega} \cdot \nabla \psi(\vec{x}, \vec{\omega})
    +
    \Sigma_t(\vec{x}) \psi(\vec{x}, \vec{\omega})
    &=
    \frac{1}{|\mathbb{S}^{d-1}|} \int_{\mathbb{S}^{d-1}} \Sigma_s(\vec{x}, \vec{\omega} \cdot \vec{\omega}') \psi(\vec{x}, \vec{\omega}') \, d\vec{\omega}'
    +
    f(\vec{x}, \vec{\omega}),
    \\
    \psi(\vec{x}, \vec{\omega})
    &=
    g_D (\vec{x}, \vec{\omega}) \quad \text{on } \Gamma_{-},
  \end{split}
\end{equation}
where $|\mathbb{S}^{d-1}|$ denotes the surface area of the
$(d-1)$-sphere. The inflow boundary condition extends across
directions as
\begin{equation}
  \label{eqn:bte_bcs}
  \Gamma_{-} = \{ (\vec{x}, \vec{\omega}) \in \Omega \times \mathbb{S}^{d-1} : \vec{\omega} \cdot \vec{n}(\vec{x}) < 0 \}.
\end{equation}

\subsection{Discrete Ordinates Method}

To approximate the BTE over discrete directions, we apply a discrete
ordinates method. For a given $N_Q > 0$, let
$\{\vec{\omega}_k\}_{k=1}^{N_Q}$ denote a set of discrete directions
with corresponding quadrature weights $\{w_k\}_{k=1}^{N_Q}$, such that
for a generic function $z$,
\begin{equation}
  \frac{1}{|\mathbb{S}^{d-1}|} \int_{\mathbb{S}^{d-1}} z(\vec{\omega}) \, d\vec{\omega} \approx \sum_{k=1}^{N_Q} w_k z(\vec{\omega}_k).
\end{equation}
This discretisation allows us to reduce \eqref{eqn:bte} to a coupled
system of transport equations, each defined over a specific direction
$\vec{\omega}_k$.

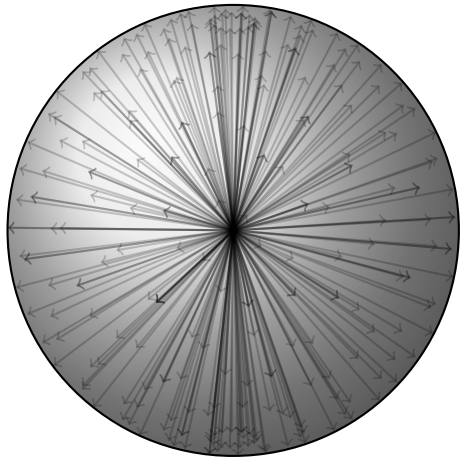
\begin{figure}[h!]
  \centering
  \tdplotsetmaincoords{70}{110}
  \begin{tikzpicture}[tdplot_main_coords, scale=3]
  \shade[ball color = gray!20] (0,0,0) circle (1cm);
  \draw[thick] (0,0,0) circle (1cm);

  \pgfmathsetmacro{\numrays}{15} 
  \pgfmathsetmacro{\rayangleincrement}{360/\numrays}
  \pgfmathsetmacro{\numcircles}{15} 
  \pgfmathsetmacro{\circleangleincrement}{360/\numcircles}

  \def\rayopacity{0.2}

  \foreach \angle in {0, \rayangleincrement, ..., 360} {
    \foreach \theta in {0, \circleangleincrement, ..., 360} {
      \tdplotsetrotatedcoords{\angle}{\theta}{0}
      \draw[thick, tdplot_rotated_coords, ->, opacity=\rayopacity] (0,0,0) -- (1,0,0); 
    }
  }
    
\end{tikzpicture}
  \caption{An illustration of discrete ordinates discretisation of the unit sphere, highlighting the angular partitioning used for transport directions.}
\end{figure}

We denote the angular approximation $\psi_k(\vec{x}) \approx
\psi(\vec{x}, \vec{\omega}_k)$ for each direction $\vec{\omega}_k$,
leading to the following system of equations. For each $k = 1, \ldots,
N_Q$, we seek $\psi_k$ such that
\begin{equation}
  \label{eqn:bte_dom_disc}
  \vec{\omega}_k \cdot \nabla \psi_k + \Sigma_t \psi_k = \sum_{l=1}^{N_Q} w_l \Sigma_s(\vec{x}, \vec{\omega}_k \cdot \vec{\omega}_l) \psi_l + f_k,
\end{equation}
where $f_k$ represents the source term in the direction
$\vec{\omega}_k$. Each equation in this system resembles the transport
equation solved in the previous section, making it suitable for the DG
formulation described above.

To apply the DG scheme, we introduce a scattering bilinear form $s_{h,
  \vec{\omega}_k}(\psi_h, v_h)$ to capture the coupling between
directions. We define the concatenated vector of directional fluences
as
\begin{equation}
  \Psi := \begin{bmatrix} \psi_{1} \\ \psi_{2} \\ \vdots \\ \psi_{N_Q} \end{bmatrix},
  \quad
  \Psi_h := \begin{bmatrix} \psi_{h,1} \\ \psi_{h,2} \\ \vdots \\ \psi_{h,N_Q} \end{bmatrix},
\end{equation}
and let
\begin{equation}
  s_{h, \vec{\omega}_k}(\Psi_h, v_h)
  :=
  \int_\Omega \sum_{l=1}^{N_Q} w_l \Sigma_s(\vec{\omega}_k \cdot \vec{\omega}_l) \psi_{h,l} v_h \, d\vec{x}.
\end{equation}
Then, for each direction $k = 1, \dots, N_Q$, we seek $\psi_{h,k} \in
\fes_p$ such that
\begin{equation}
  \label{eqn:bte-sys}
  \bi{\psi_{h,k}}{v_{h,k}}{\vec{\omega}_k}
  =
  s_{h, \vec{\omega}_k}(\Psi_h, v_{h,k})
  +
  l_{\vec{\omega}_k}(v_{h,k}) \quad \Foreach v_{h,k} \in \fes_p.
\end{equation}
An important consequence of this formulation is the ability to
quantify the error in the finite element approximation. The following
lemma establishes an a priori error bound in the energy norm for the
DG approximation.
\begin{lemma}[Error control]
    \label{lemma:error_control}
    Let $\psi_k \in \sobh{r}(\Omega)$ for $r \in \mathbb{N}$ denote
    the solution of the semi-discrete problem \eqref{eqn:bte_dom_disc}
    for each $k=1, \dots, N_Q$, and let $\psi_{h,k} \in \fes_p$ be
    the finite element approximation from \eqref{eqn:bte-sys}. Then,
    \begin{equation}
      \enorm{\psi_k - \psi_{h,k}}
      \leq
      C h^{\min(p+1, r) - \tfrac{1}{2}} \norm{\psi_k}_{\sobh{r}(\Omega)}.
    \end{equation}
    This result provides a bound on the DG approximation error, which
    can be further used to analyse the fully discrete error
    \cite{Wang_Sheng_Han_2016}.
\end{lemma}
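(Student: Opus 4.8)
The plan is to reduce the coupled estimate to the single-direction Error control Lemma of Section~\ref{subsection:fvms_transport} by treating the full discrete-ordinates system \eqref{eqn:bte-sys} as a single variational problem on the product space $\fes_p^{N_Q}$, and to use the coercivity hypothesis \eqref{eq:coerc-assumption} to absorb the angular scattering coupling. First I would assemble the global bilinear form
\[
  \mathcal B(\Psi_h,V_h):=\sum_{k=1}^{N_Q} w_k\Big( b_{\vec\omega_k}(\psi_{h,k},v_{h,k})-s_{h,\vec\omega_k}(\Psi_h,v_{h,k})\Big),
\]
so that \eqref{eqn:bte-sys} reads $\mathcal B(\Psi_h,V_h)=\sum_k w_k\, l_{\vec\omega_k}(v_{h,k})$. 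Testing with $V_h=\Psi_h$, the diagonal transport part is coercive in the energy norm exactly as in the single-direction analysis, $b_{\vec\omega_k}(v,v)\ge \enorm{v}^2$, while the scattering part is controlled by Cauchy--Schwarz and the quadrature weights. The key structural input is \eqref{eq:coerc-assumption}: since $\Sigma_t-\int_{\mathbb S^{d-1}}\Sigma_s\ge\sigma_0$, the scattering contribution is strictly dominated by the reaction term $\Sigma_t$, yielding $\mathcal B(\Psi_h,\Psi_h)\ge c\sum_k w_k\enorm{\psi_{h,k}}^2$ for some $c>0$. This is the discrete analogue of the well-posedness argument for the continuous BTE.

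Next I would establish Galerkin orthogonality for the coupled form. Because each exact semi-discrete component $\psi_k$ lies in $\sobh{r}(\Omega)\subset \sobh{1}(\Omega)$, its interelement jumps vanish and it satisfies the same weak identity tested against any $V_h\in\fes_p^{N_Q}$; subtracting \eqref{eqn:bte-sys} gives $\mathcal B(\Psi-\Psi_h,V_h)=0$. I would then split the error componentwise as $\Psi-\Psi_h=\vec\eta+\vec\xi_h$, where $\vec\eta=\Psi-\Pi\Psi$ for a suitable stable elementwise polynomial projection $\Pi$ onto $\fes_p$ and $\vec\xi_h=\Pi\Psi-\Psi_h\in\fes_p^{N_Q}$. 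Coercivity applied to $\vec\xi_h$ together with orthogonality gives $c\sum_k w_k\enorm{\xi_{h,k}}^2\le \mathcal B(\vec\xi_h,\vec\xi_h)=-\mathcal B(\vec\eta,\vec\xi_h)$, and a continuity bound for $\mathcal B$ (again mirroring the single-direction estimate, with the scattering term continuous by boundedness of $\Sigma_s$ in $\leb{\infty}$) lets me absorb $\enorm{\vec\xi_h}$ and reduce everything to $\enorm{\vec\eta}$. The standard DG interpolation estimates then give $\enorm{\eta_k}\le C h^{\min(p+1,r)-1/2}\norm{\psi_k}_{\sobh{r}(\Omega)}$ per component, exactly as in the single-direction Lemma, and the triangle inequality finishes the bound.

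The main obstacle is the angular coupling through the discrete scattering source: the error in direction $k$ does not decouple from the errors in the other directions, so a naive per-direction application of the single-direction lemma is not justified. The resolution is to work with the summed, weighted form and to exploit \eqref{eq:coerc-assumption} to guarantee that the scattering operator is a strict contraction relative to the reaction term, so that the cross-direction error contributions are absorbed into the coercive part rather than accumulating. The per-direction statement then follows from the summed estimate; the only mild technical point is that each $\norm{\psi_k}_{\sobh{r}(\Omega)}$ on the right-hand side should be read as dominating, up to the uniform constant $C$, the corresponding component of the summed regularity norm, which is consistent with the single-direction rate being inherited componentwise.
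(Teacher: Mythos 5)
The paper offers no proof of this lemma to compare against: it is quoted from the literature (\cite{Wang_Sheng_Han_2016}), just as its single-direction analogue in Section~\ref{subsection:fvms_transport} is quoted from \cite{lesaint1974finite,johnson1986analysis,bey1996hp}, so your proposal must stand on its own. The route you choose (coupled energy argument on $\fes_p^{N_Q}$, Galerkin orthogonality, componentwise projection estimates, deferring the trace/continuity estimates that produce the $h^{-1/2}$ loss to the single-direction analysis) is the natural one, but it has a concrete defect at the coercivity step: assumption \eqref{eq:coerc-assumption} controls $\Sigma_t - \int_{\mathbb{S}^{d-1}}\Sigma_s\,d\vec{\omega}'$, whereas the form $\mathcal{B}$ you assemble from \eqref{eqn:bte-sys} contains the quadrature sum $\sum_l w_l\Sigma_s(\vec{x},\vec{\omega}_k\cdot\vec{\omega}_l)$. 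When you symmetrise the double sum $\sum_{k,l}w_kw_l\int_\Omega\Sigma_s\,\psi_{h,l}\psi_{h,k}$ by Young's inequality (which also silently uses positivity of the weights and of $\Sigma_s$ and symmetry of the kernel in $(k,l)$), the inequality you actually need is $\Sigma_t - \sum_l w_l\Sigma_s(\vec{x},\vec{\omega}_k\cdot\vec{\omega}_l)\ge\sigma_0'>0$, and this does not follow from \eqref{eq:coerc-assumption}; it must either be assumed as a discrete analogue or derived from quadrature accuracy for the scattering kernel.

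The larger gap is your final step. Your argument controls the weighted sum $\sum_k w_k\enorm{\psi_k-\psi_{h,k}}^2$, and the per-direction statement with only $\norm{\psi_k}_{\sobh{r}(\Omega)}$ on the right does not follow from it. Subtracting \eqref{eqn:bte-sys} from the consistency identity for $\psi_k$ gives the error equation
\begin{equation}
  \bi{\psi_k-\psi_{h,k}}{v_h}{\vec{\omega}_k}
  =
  s_{h,\vec{\omega}_k}(\Psi-\Psi_h, v_h)
  \quad\Foreach v_h\in\fes_p,
\end{equation}
a single-direction problem whose source couples the errors in \emph{all} directions; inserting your summed estimate yields
\begin{equation}
  \enorm{\psi_k-\psi_{h,k}}
  \leq
  C h^{\min(p+1,r)-\tfrac12}\Bigl(\sum_{l=1}^{N_Q} w_l\norm{\psi_l}_{\sobh{r}(\Omega)}^2\Bigr)^{1/2},
\end{equation}
with the angular-global regularity norm on the right, not $\norm{\psi_k}_{\sobh{r}(\Omega)}$ alone. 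Your closing remark that $\norm{\psi_k}$ ``should be read as dominating'' this summed norm is exactly the assertion requiring proof, and it is false in general: choose data so that the semi-discrete solution satisfies $\psi_k\equiv 0$ in one direction (e.g.\ $f_k=-\sum_l w_l\Sigma_s\psi_l$ with zero inflow) while $\psi_l\neq\psi_{h,l}$ in the others; then $\bi{\psi_{h,k}}{v_h}{\vec{\omega}_k}=\int_\Omega\sum_l w_l\Sigma_s(\psi_{h,l}-\psi_l)v_h\neq 0$, so $\psi_{h,k}\neq 0$ even though the claimed bound would force $\psi_{h,k}=0$. The honest conclusion of your argument is the bound with the summed norm (or a constant depending on the ratio of the two norms), which is how the lemma's statement should be read and repaired.
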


\subsection{Mesh Sweeping for BTE}
\label{subsection:mesh_sweeping_BTE}

For the discrete ordinates BTE, the DG formulation \eqref{eqn:bte-sys}
yields a coupled block system of the form
\begin{equation}
  \label{eqn:full_matrix_system}
  \vec{A} \vec{\Psi}_h = \vec{S} \vec{\Psi}_h + \vec{f},
\end{equation}
where $\vec{A}$ is block diagonal and corresponds to the discrete
transport operators, and $\vec{S}$ has a dense $N_Q \times N_Q$ block
structure representing scattering interactions that inherently couple
different directions.

Due to the block diagonal structure of $\vec{A}$, a direct solution of
\eqref{eqn:full_matrix_system} may be computationally
expensive. Instead, we employ a fixed-point method known as
\emph{source iteration}. Given an initial guess $\vec{\psi}^{(0)}$,
source iteration proceeds by solving
\begin{equation}
  \vec{A} \vec{\Psi}_h^{(n)} = \vec{S} \vec{\Psi}_h^{(n-1)} + \vec{f}
\end{equation}
at each iteration. It should be noted that there are other approaches
to accelerate the convergence of this fixed-point method
\cite{HoustonHubbardRadley:2024} as well as diffusion synthetic
acceleration
\cite{southworth2020diffusionsyntheticaccelerationheterogeneous}. We
will focus on the source iteration in this work.

To facilitate an efficient solution, we apply the permutation matrix
$\vec{P}$ introduced in the previous section, which reorders the
system to align with transport dependencies. This reordering yields a
modified system
\begin{equation}
    \label{eqn:swept_full_system}
    \vec{P} \vec{A} \vec{\Psi}_h^{(n)} = \vec{P} \vec{S} \vec{\Psi}_h^{(n-1)} + \vec{P} \vec{f},
\end{equation}
where $\vec{P} \vec{A}$ is now lower triangular rather than block
diagonal. This structure allows each directional equation to be solved
sequentially within the sweep, using an efficient forward substitution
approach instead of a full matrix inversion.

However, as was the case in mono-directional transport, the ability to
construct a permutation matrix $\vec{P}$ that achieves this lower
triangular form depends on the mesh structure. Not all meshes allow a
reordering that aligns the transport dependencies without introducing
cyclic dependencies in the system. Consequently, a key objective of
this work is to identify a class of meshes for which such a reordering
is always possible, enabling the efficient use of mesh sweeping
techniques on complex geometries. This is the goal of the next
section.

\section{Cycle-Free Meshes and Voronoi Tessellations}
\label{sec:main_contributions}

In this section, we demonstrate that certain existing classes of
polytopal meshes possess properties that enable cycle-free
sweeps. This approach contrasts with previous methods that require
either specific spatial decompositions~\cite{baker1998sn} or
cycle-breaking techniques~\cite{vermaak2021}.

For a first-order hyperbolic PDE with convection in direction
$\vec{\omega}$, flux can only flow from an upwind to a downwind region
with respect to $\vec{\omega}$ over the domain. In the spatially
discrete case, this requirement translates to flow traversing each
facet of an element in our Voronoi tessellation in a single
direction. This induces a natural dependency structure among the
elements based on the fluxes. To characterise these dependencies, we
introduce the concept of a directed dual mesh.

\begin{figure}[h!]
  \begin{center}
    \tikzset{every picture/.style={line width=0.75pt}} 

\begin{tikzpicture}[x=0.75pt,y=0.75pt,yscale=-0.5,xscale=0.5]

\draw  [line width=1.5]  (248.67,129.51) -- (267.07,244.29) -- (146.46,330.88) -- (60.61,256.37) -- (81.05,145.62) -- (164.43,114.9) -- cycle ;
\draw  [line width=1.5]  (267.07,244.29) -- (334.11,290.81) -- (344.74,393.3) -- (231.91,451.9) -- (170.58,411.63) -- (146.46,330.88) -- cycle ;
\draw [color={rgb, 255:red, 126; green, 211; blue, 33 }  ,draw opacity=1 ][line width=1.5]    (146.46,330.88) -- (267.07,244.29) ;
\draw [color={rgb, 255:red, 208; green, 2; blue, 27 }  ,draw opacity=1 ][line width=1.5]    (206.76,287.59) -- (262.1,356.54) ;
\draw [shift={(264.6,359.66)}, rotate = 231.25] [fill={rgb, 255:red, 208; green, 2; blue, 27 }  ,fill opacity=1 ][line width=0.08]  [draw opacity=0] (24.96,-6.24) -- (0,0) -- (24.96,6.24) -- cycle    ;
\draw [line width=1.5]    (34.2,357.83) -- (122.19,385.93) ;
\draw [shift={(126,387.15)}, rotate = 197.71] [fill={rgb, 255:red, 0; green, 0; blue, 0 }  ][line width=0.08]  [draw opacity=0] (24.96,-6.24) -- (0,0) -- (24.96,6.24) -- cycle    ;
\draw [color={rgb, 255:red, 208; green, 2; blue, 27 }  ,draw opacity=1 ][line width=1.5]    (206.76,287.59) -- (146.86,211.2) ;
\draw [shift={(144.39,208.05)}, rotate = 51.89] [fill={rgb, 255:red, 208; green, 2; blue, 27 }  ,fill opacity=1 ][line width=0.08]  [draw opacity=0] (24.96,-6.24) -- (0,0) -- (24.96,6.24) -- cycle    ;
\draw  [color={rgb, 255:red, 208; green, 2; blue, 27 }  ,draw opacity=1 ][fill={rgb, 255:red, 208; green, 2; blue, 27 }  ,fill opacity=1 ] (203.08,287.59) .. controls (203.08,285.54) and (204.73,283.89) .. (206.76,283.89) .. controls (208.8,283.89) and (210.45,285.54) .. (210.45,287.59) .. controls (210.45,289.63) and (208.8,291.28) .. (206.76,291.28) .. controls (204.73,291.28) and (203.08,289.63) .. (203.08,287.59) -- cycle ;
\draw [line width=1.5]  [dash pattern={on 5.63pt off 4.5pt}]  (384,30) -- (385.2,460.3) ;
\draw [line width=1.5]    (486,176) -- (569.8,288.1) ;
\draw [shift={(572.2,291.3)}, rotate = 233.22] [fill={rgb, 255:red, 0; green, 0; blue, 0 }  ][line width=0.08]  [draw opacity=0] (24.96,-6.24) -- (0,0) -- (24.96,6.24) -- cycle    ;

\draw (283.3,140.63) node  [font=\LARGE] [align=left] {$\displaystyle T_{1}$};
\draw (322.26,440.63) node  [font=\LARGE] [align=left] {$\displaystyle T_{2}$};
\draw (285.06,385.69) node  [font=\LARGE,color={rgb, 255:red, 208; green, 2; blue, 27 }  ,opacity=1 ] [align=left] {$\displaystyle \ensuremath{\boldsymbol{\nu}}_{1,2}$};
\draw (83.2,350.8) node  [font=\LARGE] [align=left] {$\displaystyle \ensuremath{\boldsymbol{\omega}} $};
\draw (175.78,192.25) node  [font=\LARGE,color={rgb, 255:red, 208; green, 2; blue, 27 }  ,opacity=1 ] [align=left] {$\displaystyle \ensuremath{\boldsymbol{\nu}} _{2,1}$};
\draw (149,50) node [anchor=north west][inner sep=0.75pt]  [font=\LARGE] [align=left] {Primal};
\draw (511,50) node [anchor=north west][inner sep=0.75pt]  [font=\LARGE] [align=left] {Dual};
\draw (475,145) node  [font=\LARGE] [align=left] {$\displaystyle T_1$};
\draw (591,315) node  [font=\LARGE] [align=left] {$\displaystyle T_2$};

\end{tikzpicture}
  \end{center}
  \caption{An illustration of upwind and downwind elements for $\vec{\omega} \cdot \vec{\nu}_{1,2} > 0$, with $T_1$ upwind of $T_2$, as shown by the edge direction on the dual graph.}
  \label{fig:upwind_meaning}  
\end{figure}
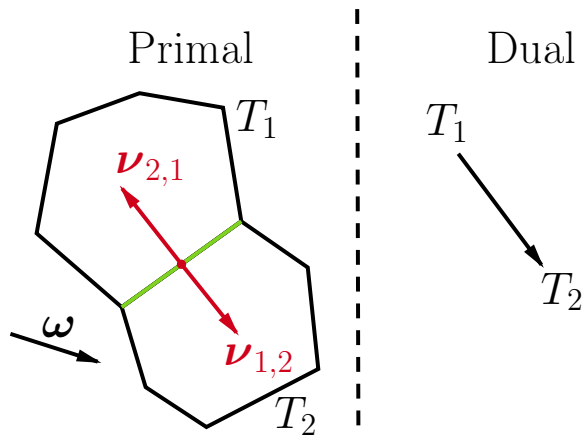

\begin{definition}[Dual mesh]
  \label{definition:dual_mesh}
  A {\it dual mesh} $\mathcal{M}^* = (\mathcal{T}^*, \mathcal{E}^*)$
  of a polytopal mesh $\mathcal{M} = (\mathcal{T}, \mathcal{E})$ is an
  undirected graph whose vertices $\mathcal{T}^*=\mathcal{T}$
  represent the elements of the primal mesh, and whose edges
  $\mathcal{E}^*$ represent shared boundaries between elements of the
  primal mesh, i.e., $\mathcal{E}^* = \{\{T_i,T_j\} \subseteq
  \mathcal{T} \colon \mathcal{E}_{T_i, T_j} \ne \emptyset\}$.
\end{definition}

If there is a known direction of flow $\vec{\omega}$ across the facets
of the primal mesh $\mathcal{E}$, this may be indicated through
directed edges, yielding a {\it directed dual mesh}. The direction of
flow between neighbouring elements $T_i$ and $T_j\in\mathcal{T}$ can be
determined by considering the sign of the dot product between
$\vec{\omega}$ and the outward-pointing normal vector of $T_i$ on
$\mathcal{E}_{i,j}$, denoted $\vec{\nu}_{i,j}$. Specifically, if
$\vec{\omega}\cdot\vec{\nu}_{i,j}(\vec{x}) > 0$ for
$x\in\mathcal{E}_{T_i, T_j}$, then $T_i$ is upwind of $T_j$, and the
directed edge points from $T_i$ to $T_j$.

\begin{definition}[Directed dual mesh]
  \label{definition:directed_dual_mesh}
  A {\it directed dual mesh} $\mathcal{M}^*_{\vec{\omega}} =
  (\mathcal{T}^*, \mathcal{E}^*)$ of a polytopal mesh $\mathcal{M} =
  (\mathcal{T}, \mathcal{E})$ with respect to a direction
  $\vec{\omega}$ is the directed graph with vertex set
  $\mathcal{T}^*=\mathcal{T}$ and edge set $\mathcal{E}^* =
  \{(T_i,T_j) \in \mathcal{T} \times \mathcal{T} \colon
  \mathcal{E}_{T_i, T_j} \ne \emptyset, \,
  \vec{\omega}\cdot\vec{\nu}_{i, j}(\vec{x}) > 0 \}$.  See
  Figure~\ref{fig:upwind_meaning} for an example of a mesh and its
  directed dual.
\end{definition}

\begin{remark}[Dual of a Voronoi mesh]
    \label{remark:voronoi_dual}
    Voronoi tessellations naturally admit a dual, the Delaunay
    triangulation, provided the tessellation is non-degenerate
    \cite{alma991001396089702761}. Additionally, the direction of flow
    along the dual edges is uniquely determined: each facet has a
    single direction of flow as described in
    Definition~\ref{definition:dual_mesh}. This is illustrated for
    $d=2$ in Figures~\ref{fig:upwind_meaning} and
    \ref{fig:primal_dual_mesh}.
\end{remark}

The directed dual mesh allows us to visualise the potential cycles of
flux in the mesh. To generate an appropriate permutation for the
linear system \eqref{eqn:swept_full_system}, we require a topological
sort of the dual mesh to order elements based on their upwind
dependencies. It can be shown that a topological sort exists if and
only if the directed graph is acyclic, i.e. is a DAG
\cite{bangJensen2009}. This property is essential: an acyclic dual
mesh results in a linear system that can be reordered to be lower
triangular.

In more complicated cases, i.e., the BTE, we require
$\mathcal{M}^*_{\vec{\omega}}$ to be acyclic for all directions
$\vec{\omega} \in \mathbb{S}^{d-1}$. If a mesh possesses this
property, we refer to it as {\it omnidirectionally acyclic}.

The following result shows that the Voronoi tessellations satisfy this
property through a reformulation of the upstream dependency condition
in Definition~\ref{definition:dual_mesh}.

\begin{figure}[h!]
  \begin{center}
    \input{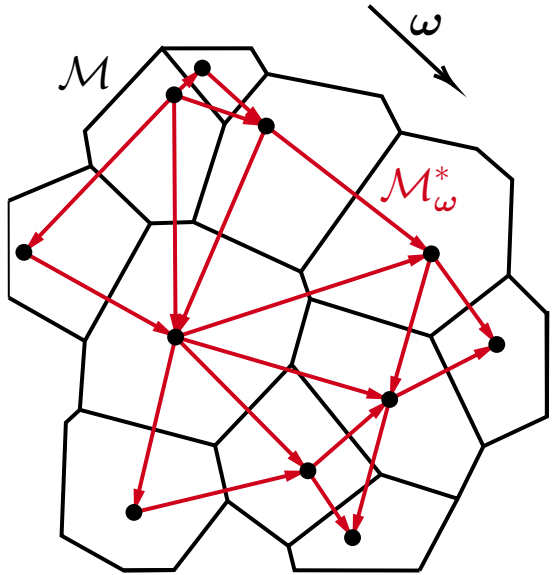}
  \end{center}
  \caption{Primal mesh elements, $\mathcal{T}$ (in black), with the associated directed dual mesh $(\mathcal{T}^, \mathcal{E}^)$, represented by black points and red arrows for a given direction $\vec{\omega}$.}
  \label{fig:primal_dual_mesh}
\end{figure}

\begin{theorem}[Voronoi meshes are omnidirectionally acyclic]
  \label{theorem:big_result}
  Voronoi meshes are omnidirectionally acyclic, i.e., there is no
  direction in which a cycle exists in the corresponding directed dual
  mesh.
\end{theorem}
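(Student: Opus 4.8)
The plan is to reduce the claimed acyclicity to the existence of a strictly monotone real-valued potential on the vertices of $\mathcal{M}^*_{\vec{\omega}}$. Concretely, for a fixed direction $\vec{\omega}\in\mathbb{S}^{d-1}$ I would assign to each cell $T_i\in\mathcal{T}$, with Voronoi centre $\vec{v}_i$, the scalar
\begin{equation}
  \phi_i := \vec{\omega}\cdot\vec{v}_i,
\end{equation}
the signed projection of the generating site onto the flow direction. The whole argument then hinges on showing that every directed edge of $\mathcal{M}^*_{\vec{\omega}}$ strictly increases this potential.

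The key geometric step is to identify the outward normal of a Voronoi facet with the direction joining the two sites. By Definition~\ref{definition:voronoi_tessellation}, the shared facet $\mathcal{E}_{T_i,T_j}$ lies in the perpendicular bisector hyperplane $\{\vec{x}:\|\vec{x}-\vec{v}_i\|_2=\|\vec{x}-\vec{v}_j\|_2\}$; expanding the squared norms shows this set equals $\{\vec{x}: 2\,\vec{x}\cdot(\vec{v}_j-\vec{v}_i)=\|\vec{v}_j\|_2^2-\|\vec{v}_i\|_2^2\}$, a hyperplane with normal parallel to $\vec{v}_j-\vec{v}_i$. Since $T_i$ is the side on which points are strictly closer to $\vec{v}_i$, the outward unit normal to $\partial T_i$ across this facet points away from $\vec{v}_i$ toward $\vec{v}_j$, i.e.
\begin{equation}
  \vec{\nu}_{i,j}=\frac{\vec{v}_j-\vec{v}_i}{\|\vec{v}_j-\vec{v}_i\|_2}.
\end{equation}
Convexity of the Voronoi cells guarantees there is a single such facet, so this normal is well defined and constant over $\mathcal{E}_{T_i,T_j}$, making the sign condition in Definition~\ref{definition:directed_dual_mesh} unambiguous.

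Combining the two steps, a directed edge $(T_i,T_j)$ exists precisely when $\vec{\omega}\cdot\vec{\nu}_{i,j}>0$, which by the normal identity is equivalent to $\vec{\omega}\cdot(\vec{v}_j-\vec{v}_i)>0$, that is, $\phi_i<\phi_j$. Hence $\phi$ strictly increases along every directed edge, and therefore along every directed path, of $\mathcal{M}^*_{\vec{\omega}}$. A directed cycle would have to return to its starting vertex while strictly increasing $\phi$ at each step, which is impossible; so $\mathcal{M}^*_{\vec{\omega}}$ is acyclic. As $\vec{\omega}\in\mathbb{S}^{d-1}$ was arbitrary, the tessellation is omnidirectionally acyclic.

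I do not expect a serious obstacle: once the normal-to-bisector identity is in hand, the topological conclusion is immediate, and the result is essentially the statement that projecting the Voronoi sites onto $\vec{\omega}$ furnishes a global linear order consistent with the induced edge orientations. The only points requiring care are bookkeeping ones, namely confirming that facets with $\vec{\omega}\cdot\vec{\nu}_{i,j}=0$ simply contribute no edge (so they cannot participate in closing a cycle), and invoking the convexity of Voronoi cells to rule out multiple facets between a single pair of cells, so that the orientation is single-valued.
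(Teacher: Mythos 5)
Your proposal is correct and follows essentially the same route as the paper's proof: both rest on the identity $\vec{\nu}_{i,j}\propto\vec{v}_j-\vec{v}_i$ for Voronoi facets and the observation that a directed edge $(T_i,T_j)$ forces $\vec{\omega}\cdot\vec{v}_i<\vec{\omega}\cdot\vec{v}_j$, so the projection onto $\vec{\omega}$ strictly increases along edges and no cycle can close up. The only difference is presentational — you derive the normal identity explicitly from the perpendicular-bisector characterisation and phrase the conclusion as a monotone potential rather than the paper's chain of inequalities $\delta_1<\delta_2<\dots<\delta_m<\delta_1$ by contradiction, which is the same argument.
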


\begin{proof}
  Let $\{ \vec{v}_i\}_i$ denote the Voronoi centres for the Voronoi mesh 
  $(\mathcal{T}, \mathcal{E})$. Since the vector between the centres of 
  neighbouring cells is perpendicular to the shared facet, we can express 
  the outward-pointing normal vector from $T_i$ to $T_j$ as
  \begin{equation}
    \label{equation:geom_normal}
    \vec{\nu}_{i,j} \coloneqq \vec{v}_j - \vec{v}_i.
  \end{equation}  

  Using this definition of normal vectors, we can rewrite
  the upstream condition from Definition~\ref{definition:dual_mesh} by
  defining $\delta_i = \vec{\omega} \cdot \vec{v}_i$ and noting
  \begin{equation}
    \label{deciding_point}
    \vec{\omega}\cdot \vec{\nu}_{ij} >0 \Leftrightarrow  \delta_i <  \delta_j.
  \end{equation}  

  Now assume by contradiction that the directed dual mesh has a cycle
  $T_1 \to T_2 \to \dots \to T_m \to T_1$.  If $\delta_i = \delta_j$ for $i \ne
  j$, the shared boundary between the corresponding elements $T_i$ and
  $T_j$ is parallel to the direction of flow and hence does not induce
  an upstream or downstream dependency. Otherwise, we obtain $\delta_1 <
  \delta_2 < \dots < \delta_m < \delta_1$, which is a contradiction.
\end{proof}

Algorithm~\ref{alg:Scheduler} computes a topological sort of the
directed dual of a Voronoi tessellation given its centres and has
several appealing properties.
\begin{algorithm}
  \caption{Voronoi-Scheduler}
  \label{alg:Scheduler}
  \begin{algorithmic}[1]                
    \State \textbf{Input:} Voronoi centers $\{\vec{v}_i\}_{i=1}^N$; direction $\vec{\omega}\in\mathbb{S}^{d-1}$.
    \State \texttt{distances} = []           
    \For{$i=1,\dots,n$}  
    \State \texttt{$\delta_i \gets \vec{\omega}\cdot\vec{v}_i$} 
    \State \texttt{distances.append($\delta_i$)}
    \EndFor
    \State \texttt{order $\gets$ argsort(distances)} 
    \State\Return \texttt{order}
  \end{algorithmic}
\end{algorithm}
First, it has a time complexity of $\mathcal{O}(N\ln N + dN)$ and a
space complexity of $\mathcal{O}(dN)$, where $d$ is the dimension of
the Voronoi centres and $N$ is the number of Voronoi
cells. Additionally, it can be executed in advance of a numerical
solver as an ``offline'' procedure. Most importantly,
Algorithm~\ref{alg:Scheduler} eliminates the need to consider cycles
in the subsequent numerical method, greatly simplifying the
implementation.

\begin{remark}[A Voronoi tessellation is sufficient but not necessary]
  We note that $\mathcal T$ being a Voronoi tessellation is a
  sufficient but not necessary condition for
  Algorithm~\eqref{alg:Scheduler} to compute a valid topological
  sort. Specifically, we require element-wise convexity and that
  Equation~\eqref{equation:geom_normal} holds, which are necessary
  conditions for a Voronoi tessellation. It is possible to extend this
  definition to include additional meshes, such as Pitteway
  triangulations (via their circumcentres), that can also be sorted
  using Algorithm~\eqref{alg:Scheduler}. We focus on Voronoi
  tessellations here to simplify our exposition.
\end{remark}

\begin{remark}[Parallelisation strategies]
  Definition~\ref{definition:voronoi_tessellation} allows us to take a
  subset $\Tilde{\Omega}\subset\Omega$ of a given Voronoi tessellation
  and apply Algorithm~\eqref{alg:Scheduler} to this subset (see
  Figure~\ref{fig:primal_dual_mesh_submesh}). This enables
  parallelisation via multi-meshing techniques, such as superimposing
  a KBA-style mesh over the original mesh and performing both the KBA
  sweep and the sub-sweep within each KBA element using
  Algorithm~\ref{alg:Scheduler}. We leave this direction for future
  research.
\end{remark}

\begin{figure}[h!]
  \begin{center}
    \input{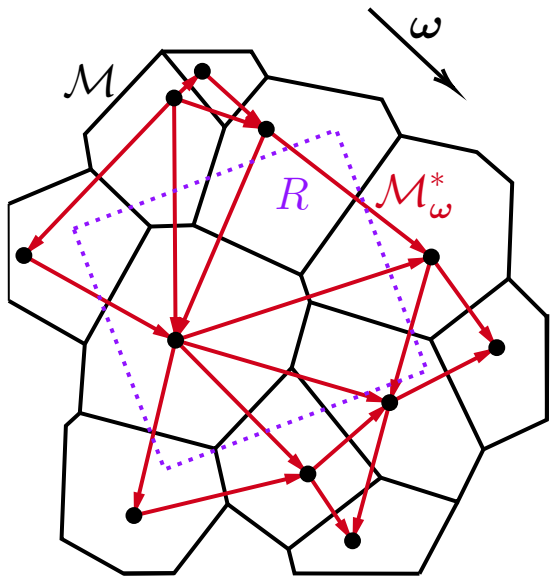}
  \end{center}
  \caption{An example of a subdomain problem: Algorithm~\ref{alg:Scheduler} sorts a subset of regions, $R$, from a full Voronoi decomposition, even when Voronoi centres lie outside $R$.}
  \label{fig:primal_dual_mesh_submesh}
\end{figure}

\section{Numerical Experimentation}
\label{section:numerics}

We present our method under various scenarios to test its robustness
and demonstrate its applicability. In \cite{HoustonHubbardRadley:2024}
a test case is provided that is challenging to approximate and
requires high resolution in both the spatial and angular components
due to steep solution gradients. We consider \eqref{eqn:bte} with the
coefficients $\Sigma_t(\vec{x}) = 1.0$ and $\Sigma_s(\vec{x}) = 0.7$
(unless otherwise stated). The source term is selected to yield the
solution
\begin{equation}
    \label{eqn:numerics_solution}
    \psi(\vec{x}, \vec{\omega}) = \exp\{-(\vec{x}\cdot\vec{\omega})^2\}.
\end{equation}
We evaluate the solution over the unit square, $\Omega = [0,1]^2$,
unless otherwise stated. In this section, we aim to answer several
questions related to the robustness and performance of our
approach. We assess Algorithm~\ref{alg:Scheduler}'s robustness under
varying numbers of discrete ordinates and spatial elements, and
examine the structure of the operator $\vec{P}\vec{A}$ from
Equation~\eqref{eqn:swept_full_system}. We also demonstrate the
convergence of the fixed-point iterative scheme and the numerical
method under $h$- and $k$-refinement. Finally, we showcase the
method's applicability to a complex geometry by solving for the scalar
flux solution over a representative domain of a nuclear reactor core.

\subsection{Robustness with Respect to Discrete Ordinates}

We first examine the algorithm's scalability when applied to the
discrete ordinates problem presented in
\eqref{eqn:full_matrix_system}. We report CPU times for the complete
topological sort as a function of the number of spatial elements, $N$,
for various numbers of discrete ordinates, $N_Q$. We anticipate
log-linear scaling in $N$ and linear scaling in $N_Q$, leading to an
overall complexity of $\mathcal{O}(N_Q N \ln N)$.

As observed in \cite{PearceKelly:2007}, optimisations may be achieved
by reusing previous sorts. For instance, if a sweep in direction
$\vec{\omega}_k$ has been completed, a sweep in direction
$-\vec{\omega}_k$ is trivial due to the property of
\eqref{deciding_point}. These optimisations were not implemented in
this study, as our aim is to demonstrate robustness across $N_Q$ and
$N$.

\begin{figure}[h!]
    \centering
    \begin{tikzpicture}
    \begin{axis}[
        thick,
        xmode=log,
        ymode=log,
        xlabel= {Number of Elements, $N$},
        ylabel= {CPU Time [s]},
        grid=both,
        minor grid style={gray!25},
        major grid style={gray!25},
        legend style={draw=none, at={(1, 0)}, anchor=south east, font=\small},
        legend pos=outer north east,
      ]
      \addlegendimage{empty legend}
      \addlegendentry{\hspace{-.6cm}\textbf{No. Discrete}}
      \addlegendimage{empty legend}
     \addlegendentry{\hspace{-.8cm}\textbf{Ordinates ($N_Q$)}}

      \addplot+[red, mark options={black, scale=0.75},
      ] table[x=n_elements, y=time, col sep=comma] {tikz_Schaubilder/robustness_in_k/5_1_example_2.csv};
      \addlegendentry{2};
      \addplot+[green, mark options={black, scale=0.75},
      ] table[x=n_elements, y=time, col sep=comma] {tikz_Schaubilder/robustness_in_k/5_1_example_4.csv};
      \addlegendentry{4};
      \addplot+[blue, mark options={black, scale=0.75},
      ] table[x=n_elements, y=time, col sep=comma] {tikz_Schaubilder/robustness_in_k/5_1_example_8.csv};
      \addlegendentry{8};
      \addplot+[cyan, mark options={black, scale=0.75},
      ] table[x=n_elements, y=time, col sep=comma] {tikz_Schaubilder/robustness_in_k/5_1_example_16.csv};
      \addlegendentry{16};
      \addplot+[magenta, mark options={black, scale=0.75},
      ] table[x=n_elements, y=time, col sep=comma] {tikz_Schaubilder/robustness_in_k/5_1_example_32.csv};
      \addlegendentry{32};
      \addplot+[red, mark options={black, scale=0.75},
      ] table[x=n_elements, y=time, col sep=comma] {tikz_Schaubilder/robustness_in_k/5_1_example_64.csv};
      \addlegendentry{64};
      \addplot+[green, mark options={black, scale=0.75},
      ] table[x=n_elements, y=time, col sep=comma] {tikz_Schaubilder/robustness_in_k/5_1_example_128.csv};
      \addlegendentry{128};
      \addplot+[blue, mark options={black, scale=0.75},
      ] table[x=n_elements, y=time, col sep=comma] {tikz_Schaubilder/robustness_in_k/5_1_example_256.csv};
      \addlegendentry{256};
      \addplot+[cyan, mark options={black, scale=0.75},
      ] table[x=n_elements, y=time, col sep=comma] {tikz_Schaubilder/robustness_in_k/5_1_example_512.csv};
      \addlegendentry{512};

      \addplot[domain=25:6400, 
               samples=100, 
               color=red,
               dotted]
               {8e-4 * x * ln(x)};
      \addlegendentry{$\mathcal{O}(N\ln{N})$};
  
    \end{axis}
  \end{tikzpicture}
    \caption{\label{fig:CPU} CPU times for Algorithm~\ref{alg:Scheduler}, showing performance in generating sweeping permutations for a Voronoi mesh with varying numbers of elements and discrete ordinates.}
\end{figure}
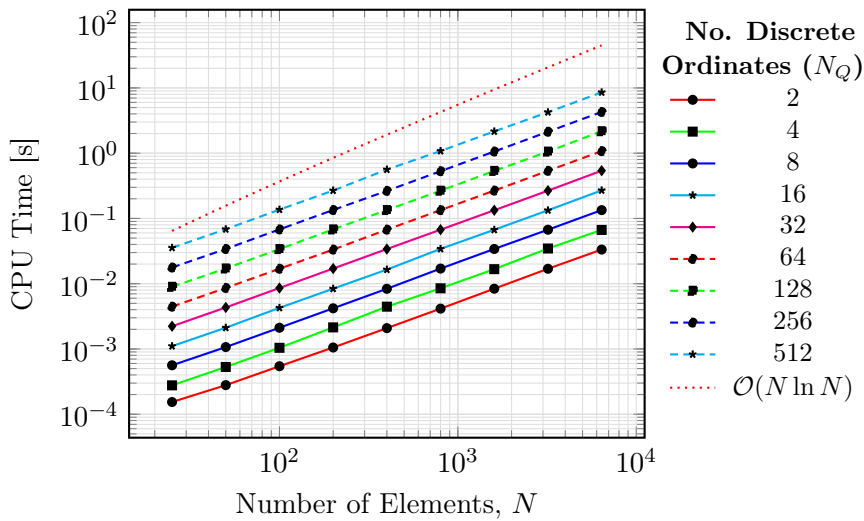
The results in Figure \ref{fig:CPU} confirm that the
algorithm's behaviour is as expected for varying $N_Q$ and $N$. The
geometry of the mesh does not affect the algorithm, making it
consistent and robust across varying values of $N_Q$ and $N$. The
results also demonstrate that CPU times scale equivalently across
cases where total degrees of freedom are comparable.

\subsection{Matrix Structure}

Applying Algorithm~\ref{alg:Scheduler} to
\eqref{eqn:full_matrix_system} results in a block lower-triangular
structure for $\vec{A}$. With the lowest order DG scheme, $p=0$, this
structure simplifies to a fully lower-triangular
system. Figure~\ref{fig:spies_a}
illustrates this for four discrete ordinates and one hundred spatial
elements. Results are analogous for higher values of $N_Q$ and $N$. We
also examine the structure of the scattering operator, $\vec{S}$,
which remains sparse after preconditioning but does not simplify to a
lower-triangular structure.

\begin{figure}[h!]
  \centering
  \subcaptionbox{Non-preconditioned matrix $\vec{A}$ with spy of $\vec{A}_3$}
  {\includegraphics[width=1\linewidth]{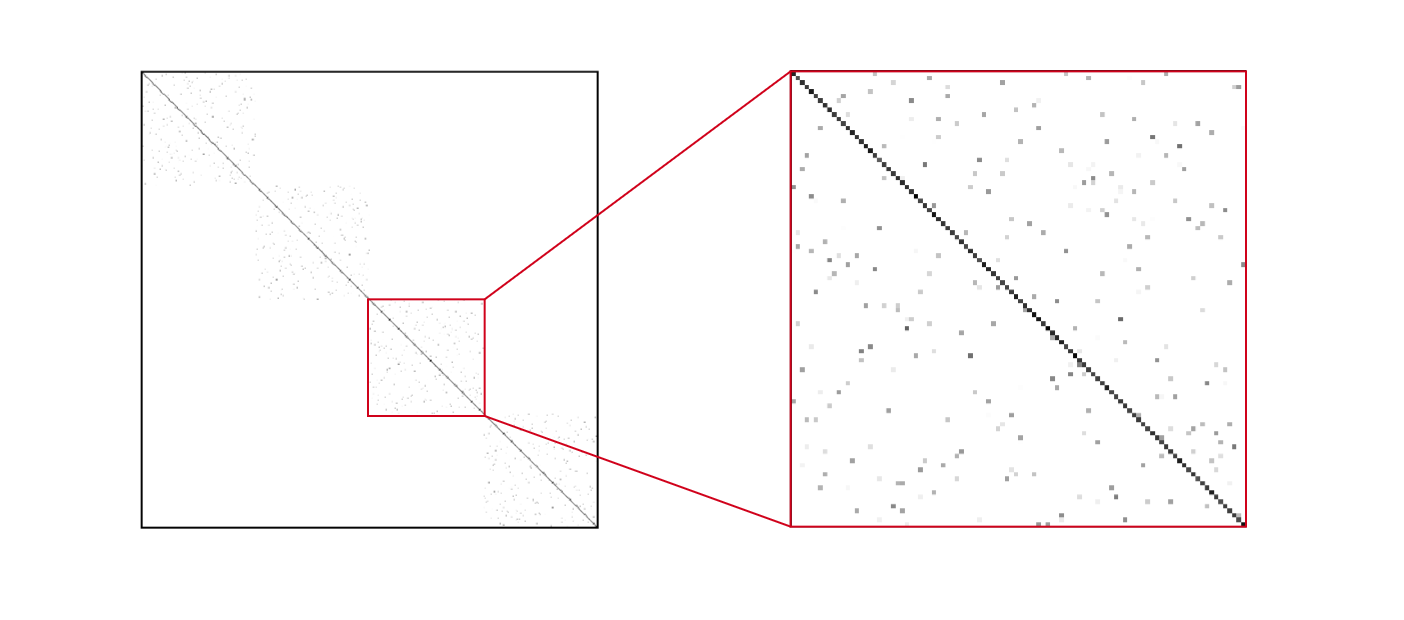}}%
  \hspace{4em}
  \subcaptionbox{Preconditioned matrix $\vec{P}\vec{A}$ with spy of $(\vec{P}\vec{A})_3$\label{fig:spy_a_swept}}
  {\includegraphics[width=1\linewidth]{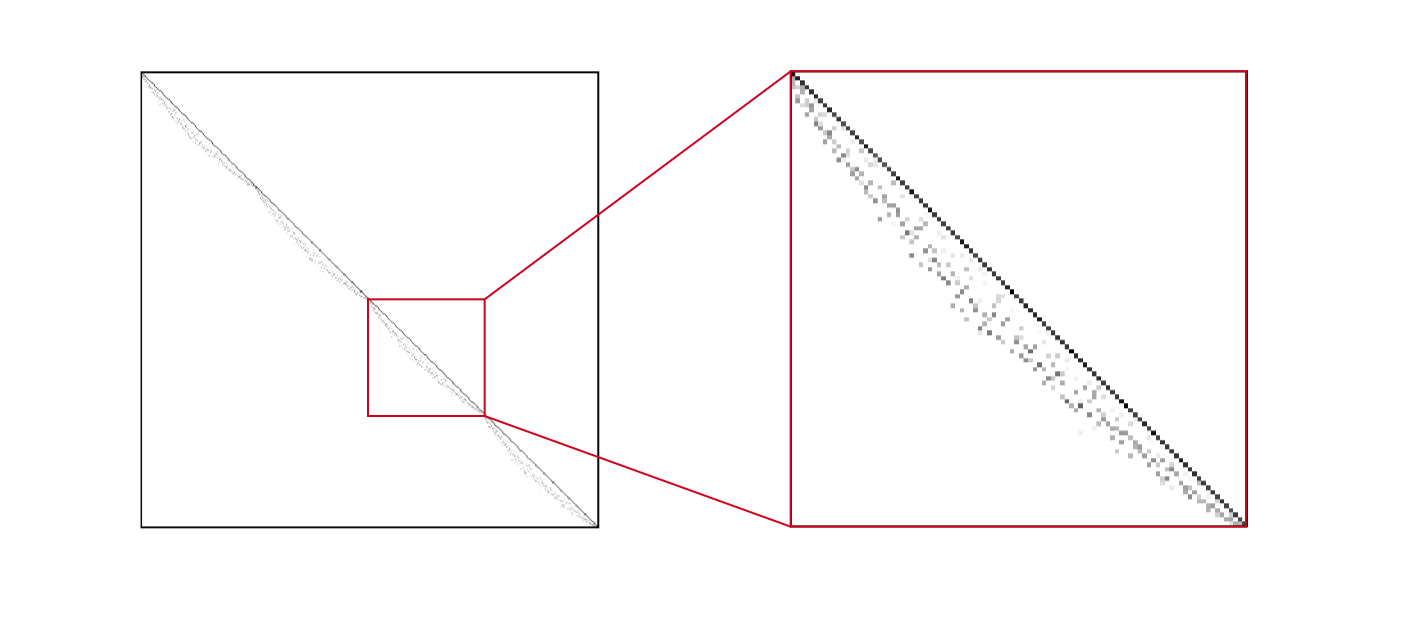}}
  \caption{Structure of the hyperbolic linear Boltzmann transport operator $\vec{A}$ before and after applying a sweeping permutation. The operator $\vec{A}$ represents the left-hand side in \eqref{eqn:bte}, while $\vec{P}\vec{A}$ denotes its permuted (swept) form. Subscripts indicate associations with the discrete ordinate direction $\vec{\omega}_k$.}
  \label{fig:spies_a}
\end{figure}

Figure~\ref{fig:spy_a_swept} demonstrates that the preconditioned
operator $(\vec{P}\vec{A})_k$ exhibits the expected lower-triangular
structure. This structure supports efficient upwind-downwind solution
transfer. By contrast, the scattering operator $\vec{S}$ shown in
Figure \ref{fig:spies_s} remains sparse but has a more complex block
structure after the sweep, with off-diagonal behaviour varying based on
the angles between ordinates. For example, $\vec{S}_{1,3}$ appears
anti-diagonal due to the opposing ordinates, whereas $\vec{S}_{2,3}$
has a circular structure due to perpendicularity. Higher ordinate
counts produce more intricate patterns, but crucially, the density and
structure do not seem to detract from the efficiency of the source
iteration algorithm.

\begin{figure}[h!]
  \centering
  \subcaptionbox{Non-preconditioned matrix $\vec{S}$ with spy of $\vec{S}_{2,3}$\label{fig:spyss_unswept}}
  {\includegraphics[width=1\linewidth]{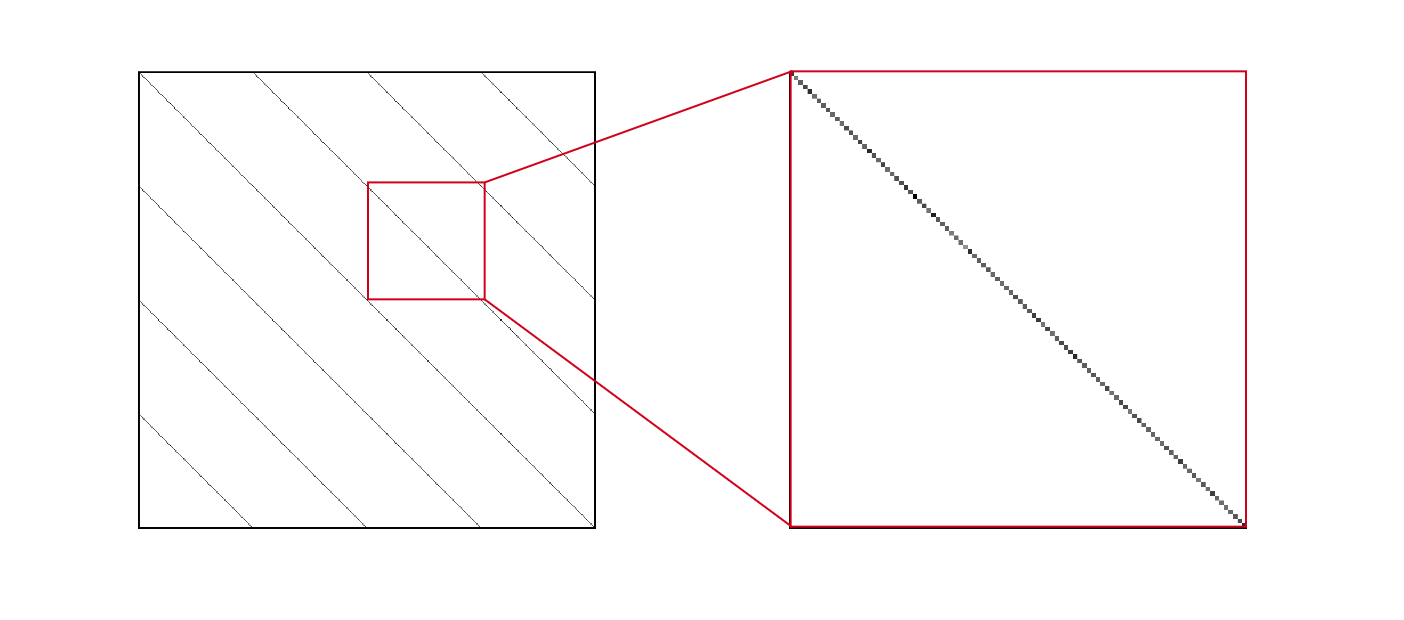}}%
  \hspace{4em}
  \subcaptionbox{Preconditioned matrix $\vec{P}\vec{S}$ with spy of $(\vec{P}\vec{S})_{2,3}$\label{fig:spy_s_swept}}
  {\includegraphics[width=1\linewidth]{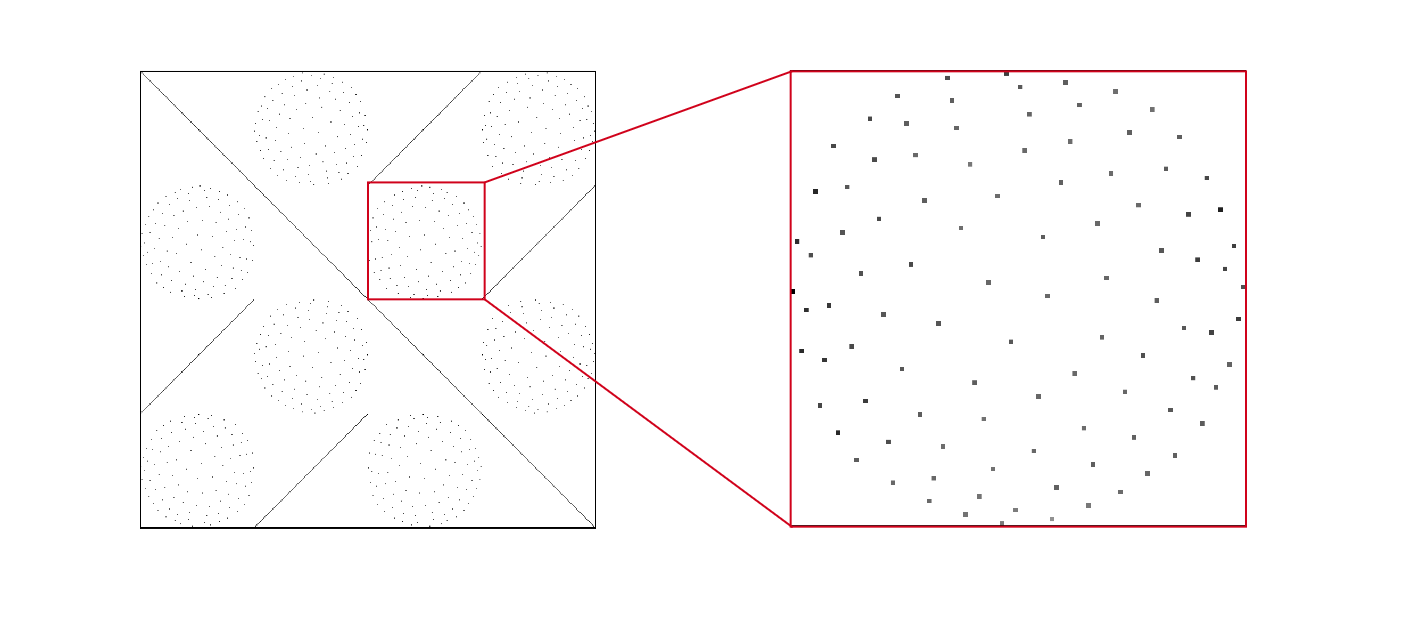}}
  \caption{Structure of the hyperbolic linear Boltzmann transport scattering operator $\vec{S}$ before and after a sweeping permutation. The operator $\vec{S}$ appears on the right-hand side of \eqref{eqn:bte}, with $\vec{P}\vec{S}$ representing its permuted (swept) form. Subscripts $(k, l)$ indicate scattering from ordinate $\vec{\omega}_k$ to $\vec{\omega}_l$.}
  \label{fig:spies_s}
\end{figure}


\subsection{Convergence Studies with the BTE}

We assess convergence from two perspectives: convergence with
respect to mesh refinement for each discrete ordinate direction
(ordinate-wise convergence) and convergence of the source
iteration itself.

For ordinate-wise convergence, we analyse the energy norm error, as
indicated in Lemma~\ref{lemma:error_control}. For the source iteration convergence, we use a Bochner-type norm over $L^2(\mathbb{S}^1;
L^2(\Omega))$, which accounts for errors across both angular and
spatial components and is given by the formula in Equation ~\eqref{eqn:bochner_norm}.

\begin{equation}
    \label{eqn:bochner_norm}
    \enorm{\vec\Psi_h - \vec{\Psi}_h^{(n)}}_{\Omega_{N_Q}} = \sum_{k=1}^{N_Q} w_k\enorm{\psi_k - \psi_{h,k}}
\end{equation}

Figure~\ref{fig:ordinate_wise_convergence} illustrates the error in the energy
norm as a function of the discrete ordinate direction
$\vec{\omega}_k$, for varying levels of spatial and angular
resolution. Figure~\ref{fig:ordinate_wise_convergence} shows the
asymptotic convergence of the error as a function of the maximal cell diameter for 128 discrete ordinates, which match the results presented in Lemma~\ref{lemma:error_control}.


\begin{figure}[h!]
   \centering
   \begin{tikzpicture}
    \begin{groupplot}[
        group style={
            group size=2 by 1,
            horizontal sep= 0.75cm,
            vertical sep=2cm,
            every plot/.style={ymin=0.06,ymax=0.16, axis line style={black, line width=0.5pt}},
        },
        enlargelimits=false,
    ]

    \nextgroupplot[
        ymode=log,
        xlabel={$\theta_k$},
        ylabel = {$\enorm{\psi_k - \psi_{h,k}}$},
        xmin=-0.01, 
        xmax=2*pi+0.01,
        grid=major,
        xtick={0, 0.5*pi, pi, 1.5*pi, 2*pi},
        xticklabels={$0$,$\frac{\pi}{2}$,$\pi$,$\frac{3\pi}{2}$,$2\pi$},
        legend style={at={(0.5, 0.1)},anchor=north,
        legend columns=4,
        fill=white,
        draw=black,
        anchor=center,
        align=center},
        style={line width=1.0pt}
    ]
    \addlegendimage{empty legend}
    \addlegendentry{\textbf{No. Elements:}\hspace{0.2cm}}

    \addplot+[only marks, mark options={black, mark=square*, scale=0.4}] 
    table[x=theta, y=25, col sep=comma] {tikz_Schaubilder/5_4_regions_convergence/128/5_4_energy_norm_angle_128.csv};
    \addlegendentry{25\hspace{0.2cm}};

    \addplot+[only marks, mark options={purple, mark=triangle*, scale=0.4}] 
    table[x=theta, y=50, col sep=comma] {tikz_Schaubilder/5_4_regions_convergence/128/5_4_energy_norm_angle_128.csv};
    \addlegendentry{50\hspace{0.2cm}};

    \addplot+[only marks, mark options={pink, scale=0.4}] 
    table[x=theta, y=100, col sep=comma] {tikz_Schaubilder/5_4_regions_convergence/128/5_4_energy_norm_angle_128.csv};
    \addlegendentry{100\hspace{0.2cm}};




    \addplot+[only marks, mark=+, mark options={red,scale=2, line width=2pt, rotate=45}] table {
        0.05 0.146662
        0.05 0.123099
        0.05 0.103643
        };

    \addplot+[only marks, mark=square, mark options={orange,scale=1, line width=2pt, rotate=45}] table {
        4.00607 0.154564
        4.00607 0.1303472
        4.00607 0.110541
        };

    \addplot+[only marks, mark options={blue,scale=1, line width=2pt, rotate=45}] table {
        4.84056 0.139330
        4.84056 0.115056
        4.84056 0.100366
        };

    \nextgroupplot[ 
        ymode=log,
        xmode=log,
        grid=major,
        xtick={0.15849, 0.19953, 0.25119}, 
        xticklabels={$10^{-0.8}$, $10^{-0.7}$, $10^{-0.6}$},
        xlabel={$h$},
        yticklabel=\empty,
        legend style={at={(0.5, 0.1)},anchor=north,
        legend columns=4,
        fill=white,
        draw=black,
        anchor=center,
        align=center},
        style={line width=1.0pt}
    ]
    \addlegendimage{empty legend}
    \addlegendentry{\textbf{$\theta_k\approx$}\hspace{0.2cm}}

    \addplot+[black, mark=+, mark options={red,scale=2, line width=2pt, rotate=45}] 
    table[x=h, y=sub_norm, col sep=comma] {tikz_Schaubilder/5_4_regions_convergence/sub_norms/128_0.csv};
    \addlegendentry{0.03\hspace{0.2cm}};

    \addplot+[black, mark=square, mark options={orange,scale=1, line width=2pt, rotate=45}] 
    table[x=h, y=sub_norm, col sep=comma] {tikz_Schaubilder/5_4_regions_convergence/sub_norms/128_81.csv};
    \addlegendentry{4.00\hspace{0.2cm}};

    \addplot+[black, mark options={blue,scale=1, line width=2pt, rotate=45}] 
    table[x=h, y=sub_norm, col sep=comma] {tikz_Schaubilder/5_4_regions_convergence/sub_norms/128_98.csv};
    \addlegendentry{4.84\hspace{0.2cm}};

    \addplot[domain=0.156:0.298, samples=100, color=black, dotted, very thick]{0.23
    * x^(0.5)};
    \node at (axis cs:0.22,0.1) {$\mathcal{O}(h^{\frac{1}{2}})$};

    \end{groupplot}

\end{tikzpicture}
   \caption{
    \label{fig:ordinate_wise_convergence}
A comparison of error metrics. (Left) Error as a function of angle for three spatial mesh resolutions, with specific angles marked by a circle, cross, and diamond. (Right) Error as a function of spatial mesh diameter at the marked angles, illustrating the relationship between angular error and spatial resolution.
    }
\end{figure}
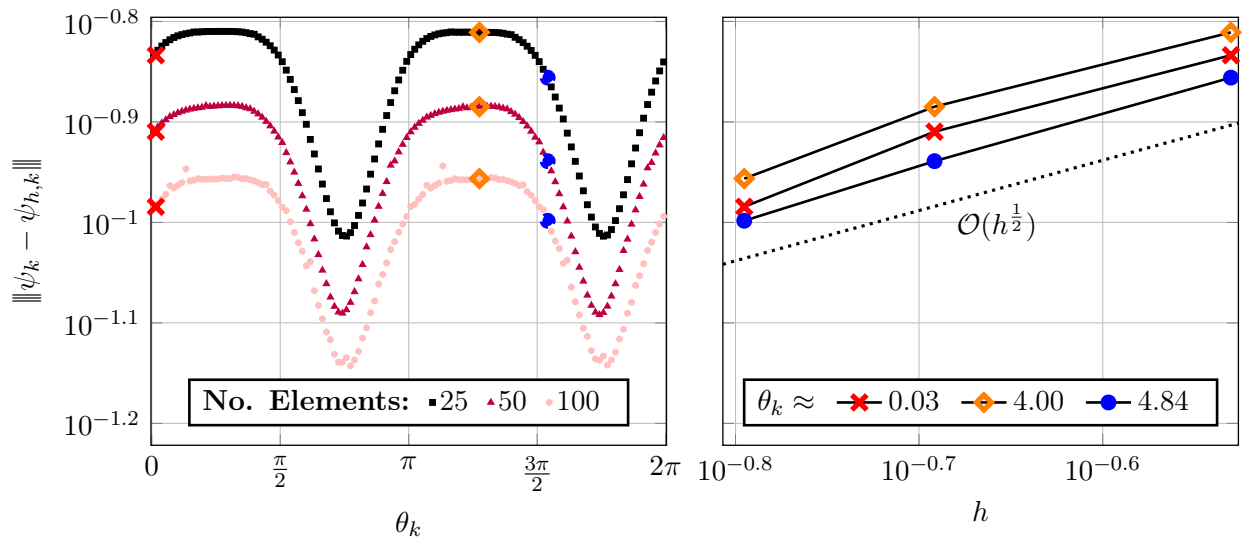

Finally in Figure~\ref{fig:section_5_4_iterations}, we assess the convergence of the source iteration itself,
comparing the iterates $\vec{\Psi}_h^{(n)}$ to the solution $\vec\Psi_h$ for various scattering
ratios, $c = \sup_{\vec{x}\in\Omega}
\frac{\Sigma_s(\vec{x})}{\Sigma_t(\vec{x})}$, to illustrate the effect
of scattering.

Results are shown for $c=0.7, 0.95, 0.999$ and varying
degrees of freedom with 64 ordinates. We also examine the spectral
radius, noting that source iteration convergence depends on $\rho(\vec
A^{-1}\vec S)$ due to its Richardson iteration-like structure.

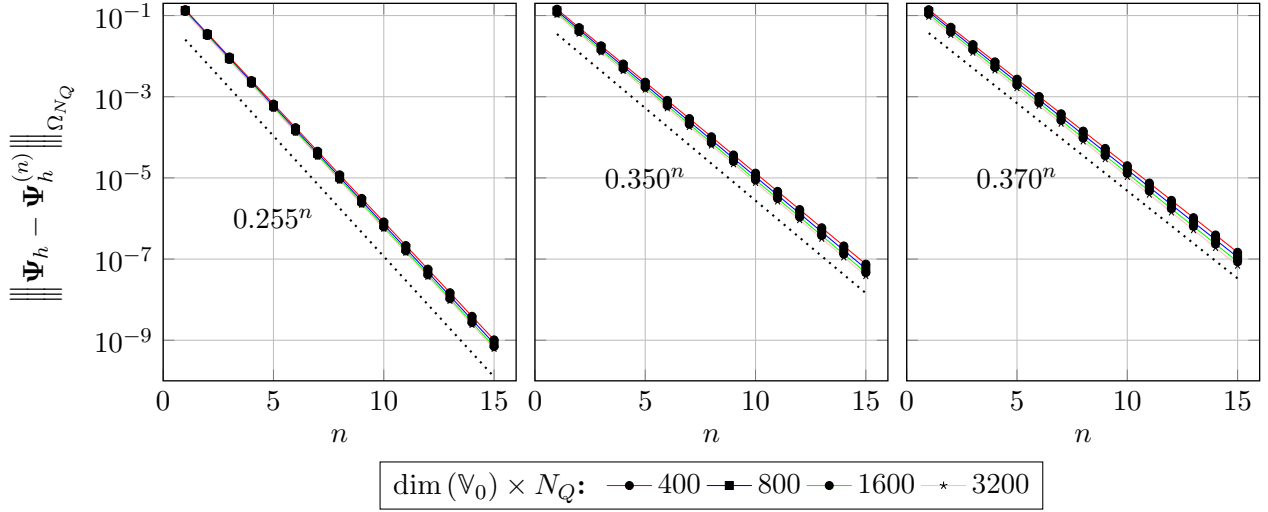
\begin{figure}
    \centering
    \begin{tikzpicture}
    \begin{groupplot}[
        group style={
            group size=3 by 1,
            horizontal sep= 0.25cm, 
            vertical sep=1cm,
            every plot/.style={
            ymin=10e-11,ymax=0.2, 
            axis line style={black, line width=0.5pt},
            xmin=0, xmax=16,
            },
        },
        enlargelimits=false,
    ]
 
      \nextgroupplot[
        width=0.38\textwidth,
        height=0.4\textwidth,
        xlabel={$n$},
        ylabel = {$\enorm{\vec\Psi_h - \vec\Psi_h^{(n)}}_{\Omega_{N_Q}}$},
        ymode=log,
        grid=major,
        legend style={
            at={($(0,0)+(1cm,1cm)$)},
            legend columns=5,
            fill=none,
            draw=black,
            anchor=center,
            align=center},
        legend to name=legend,
    ]
    \coordinate (c1) at (rel axis cs:0,1);
    \addlegendimage{empty legend}
    \addlegendentry{\textbf{$\dim{(\fes_0)\times N_Q}$:}\hspace{0.2cm}}
    
    \addplot+[red, mark options={black, scale=0.75}] 
    table[x=iterates, y=25, col sep=comma] {tikz_Schaubilder/5_4_iteration_convergence/0_7/5_4_convergence_0_7_64.csv};
    \addlegendentry{400};
    
    \addplot+[blue, mark options={black, scale=0.75}] 
    table[x=iterates, y=50, col sep=comma] {tikz_Schaubilder/5_4_iteration_convergence/0_7/5_4_convergence_0_7_64.csv};
    \addlegendentry{800};
    
    \addplot+[green, mark options={black, scale=0.75}] 
    table[x=iterates, y=100, col sep=comma] {tikz_Schaubilder/5_4_iteration_convergence/0_7/5_4_convergence_0_7_64.csv};
    \addlegendentry{1600};
    
    \addplot+[pink, mark options={black, scale=0.75}] 
    table[x=iterates, y=200, col sep=comma] {tikz_Schaubilder/5_4_iteration_convergence/0_7/5_4_convergence_0_7_64.csv};
    \addlegendentry{3200};
    
    \addplot[domain=1:15, samples=100, color=black, thick, dotted]{0.1 * 0.255^x};
    \node at (axis cs:5,10e-7) {$0.255^n$};
    
    \nextgroupplot[
        width=0.38\textwidth,
        height=0.4\textwidth,
        xlabel={$n$},
        ymode=log,
        yticklabels={},
        grid=major,
        legend style={at={(1.05,1)}, anchor=north west},
    ]
    
    \addplot+[red, mark options={black, scale=0.75}] 
    table[x=iterates, y=25, col sep=comma] {tikz_Schaubilder/5_4_iteration_convergence/0_95/5_4_convergence_0_95_64.csv};
    
    \addplot+[blue, mark options={black, scale=0.75}] 
    table[x=iterates, y=50, col sep=comma] {tikz_Schaubilder/5_4_iteration_convergence/0_95/5_4_convergence_0_95_64.csv};
    
    \addplot+[green, mark options={black, scale=0.75}] 
    table[x=iterates, y=100, col sep=comma] {tikz_Schaubilder/5_4_iteration_convergence/0_95/5_4_convergence_0_95_64.csv};
    
    \addplot+[pink, mark options={black, scale=0.75}] 
    table[x=iterates, y=200, col sep=comma] {tikz_Schaubilder/5_4_iteration_convergence/0_95/5_4_convergence_0_95_64.csv};
    
    \addplot[domain=1:15, samples=100, color=black, thick, dotted]{0.1 * 0.35^x};
    \node at (axis cs:5,10e-6) {$0.350^n$};

    \nextgroupplot[
        width=0.38\textwidth,
        height=0.4\textwidth,
        xlabel={$n$},
        yticklabels={},
        ymode=log,
        grid=major,
        legend style={at={(1.05,1)}, anchor=north west},
    ]
    \coordinate (c2) at (rel axis cs:1,1);
    
    \addplot+[red, mark options={black, scale=0.75}] 
    table[x=iterates, y=25, col sep=comma] {tikz_Schaubilder/5_4_iteration_convergence/0_999/5_4_convergence_0_999_64.csv};
    
    \addplot+[blue, mark options={black, scale=0.75}] 
    table[x=iterates, y=50, col sep=comma] {tikz_Schaubilder/5_4_iteration_convergence/0_999/5_4_convergence_0_999_64.csv};
    
    \addplot+[green, mark options={black, scale=0.75}] 
    table[x=iterates, y=100, col sep=comma] {tikz_Schaubilder/5_4_iteration_convergence/0_999/5_4_convergence_0_999_64.csv};
    
    \addplot+[pink, mark options={black, scale=0.75}] 
    table[x=iterates, y=200, col sep=comma] {tikz_Schaubilder/5_4_iteration_convergence/0_999/5_4_convergence_0_999_64.csv};
    
    \addplot[domain=1:15, samples=100, color=black, thick, dotted]{0.1 * 0.37^x};
    \node at (axis cs:5,10e-6) {$0.370^n$};

    \end{groupplot}

    \coordinate (c3) at ($(c1)!.5!(c2)$);
    \node[below] at (c3 |- current bounding box.south)
      {\pgfplotslegendfromname{legend}};
    
\end{tikzpicture}
    \caption{Convergence of iterates $\vec\Psi_h^{(n)}$ from \eqref{eqn:swept_full_system} to the numerical solution $\vec\Psi_h$ of \eqref{eqn:full_matrix_system}, shown for scattering ratios $c = 0.7$, $0.95$, and $0.999$. Overlaid trend lines indicate expected convergence rates based on the spectral radius of the iteration matrix.}
      \label{fig:section_5_4_iterations}
\end{figure}
The spectral radius is lower than the scattering ratio in all
cases. As $c\to 1$, convergence slows but remains adequate for
practical purposes. Preconditioning through sweeping notably
accelerates convergence and reduces computational complexity through
the use of a lower triangular solver.

\subsection{Complex Geometries}
\label{section:numerics:complex_geometries}

Finally, we consider a more complex geometry, representative of a
reactor core as presented in \cite{10.1007/978-3-642-15414-0_21}, to
demonstrate mesh generation on a non-trivial
domain. Figure~\ref{fig:reactor_core} shows the Voronoi tessellation
of the domain, with each region colour-coded by core material.

\begin{figure}[h!]
  \centering
  \subcaptionbox{Voronoi tessellation of reactor core, with core material colour-coded in the legend.\label{fig:reactor_core}}[0.44\textwidth]
  {\includegraphics[width=0.44\textwidth]{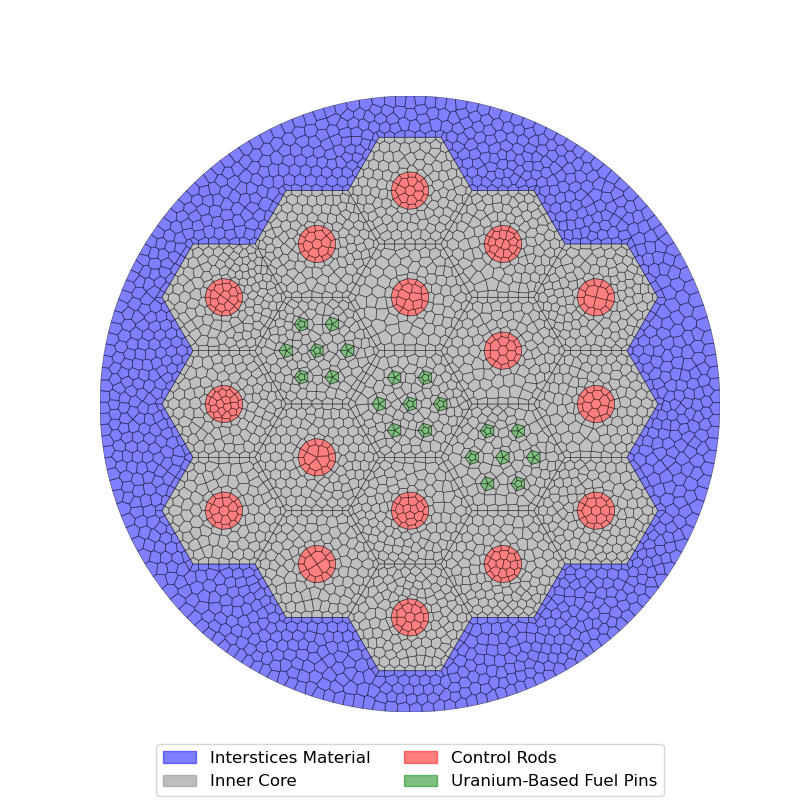}}
  \hfill
  \subcaptionbox{Numerical scalar flux $\phi_h$ for Equation~\eqref{eqn:swept_full_system}, calculated over the mesh in Figure~\ref{fig:reactor_core}.\label{fig:full_simulation}}[0.54\textwidth]
  {\includegraphics[width=0.54\textwidth]{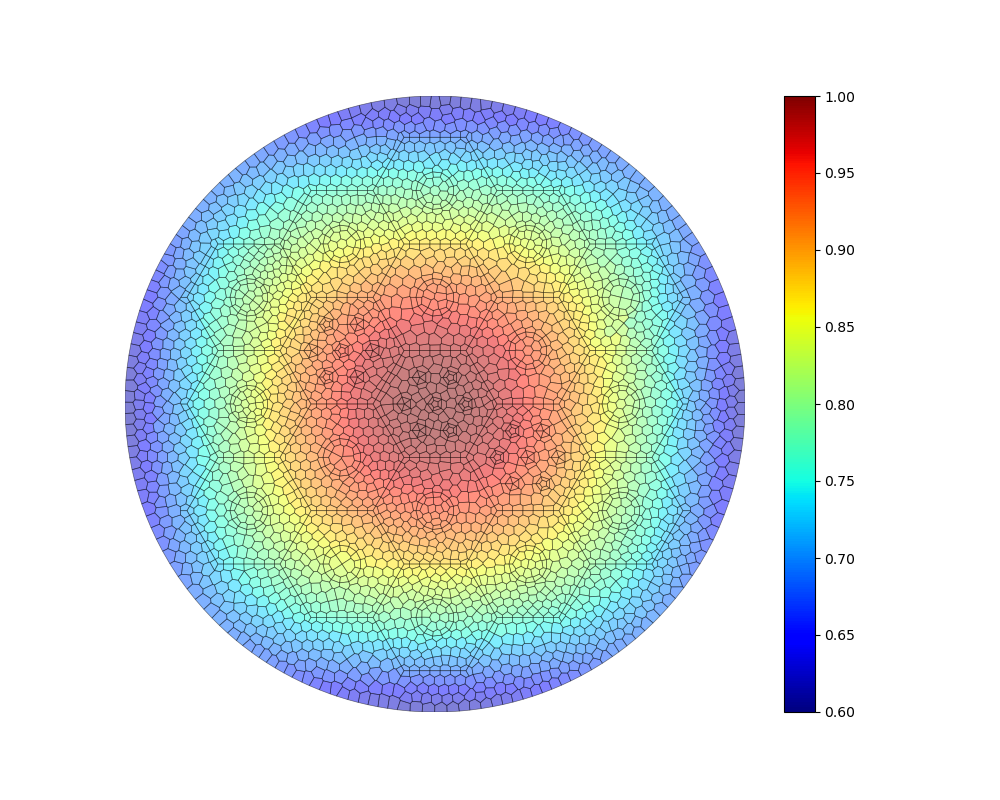}\label{fig:full_simulation}}
  \caption{Voronoi mesh of a reactor core with the corresponding numerical scalar flux solution to \eqref{eqn:swept_full_system}, illustrating radiative intensity across the reactor domain.}
  \label{fig:reactor_images}
\end{figure}


The Voronoi tessellation effectively captures complex domain geometry
and is compatible with sweeping techniques. PolyMesher
\cite{PolyMesher} was used to generate the mesh, with fixed points for
regions requiring higher resolution.

For this example, we simulate the BTE solution on the mesh shown in
Figure~\ref{fig:reactor_core}, modifying the macroscopic and
scattering cross-sections to yield a uniform scattering ratio across
the domain. The scalar flux solution, defined as
\begin{equation}
    \label{eqn:numerical_scalar_flux}
    \phi_h(\vec{x}) = \sum_{k=1}^{N_Q} w_k\psi_{h,k}(\vec{x}) \approx \frac{1}{|\mathbb{S}^{d-1}|}\int_{\mathbb{S}^{d-1}}\psi(\vec{x}, \vec{\omega})\,d\vec{\omega} = \phi(\vec{x}),
\end{equation}
is shown in Figure~\ref{fig:full_simulation}. The solution exhibits
expected behavior based on Equation~\eqref{eqn:numerics_solution} and
demonstrates the viability of Voronoi meshes and sweeping algorithms
for both complex and practical geometries.

\section{Conclusion}
\label{section:summary}

In this paper, we developed and analysed mesh sweeping methods within
the framework of transport equations, with a particular focus on
Voronoi tessellations as a robust solution for achieving cycle-free
sweeps across multiple directions and dimensions. Our study showed
that Voronoi meshes inherently support omnidirectional acyclic sweeps,
a property that enables efficient solutions for complex transport
problems, including a simplified form of the linear Boltzmann
transport equation.

Key contributions of this work include the theoretical foundation
establishing Voronoi meshes as omnidirectionally acyclic and the
introduction of a scheduler that guarantees topologically sorted,
sweep-compatible meshes. Numerical results confirmed that this
approach achieves near-optimal convergence rates, meeting or exceeding
the performance of existing methods while eliminating the need for
deadlock resolution, an advancement that simplifies implementation and
enhances computational efficiency. This makes the proposed framework
particularly well-suited for polytopal DG methods.

Our findings open several avenues for future research, including
integration with adaptive mesh refinement techniques to dynamically
balance computational cost and accuracy in domains with localised
features. Additionally, investigating sweeping methods on alternative
mesh types that share or approximate the omnidirectional acyclic
properties of Voronoi meshes could broaden the impact of this
approach, making it applicable to a wider range of meshing
strategies. This framework has the potential to facilitate
applications in more diverse fields, including charged particle
transport, where the transport direction varies and characteristics
follow curved paths rather than straight lines.

\section{Data Availability}

The code to generate all the data in this paper is included in the
Zenodo archive linked to this paper \cite{zenodo}.

\section{Acknowledgements}

This project was instigated at an Integrative Think Tank run by the
Statistical Applied Mathematics at Bath (SAMBa) EPSRC Centre for
Doctoral Training (CDT) at the University of Bath. ME \& HL are
supported by scholarships from SAMBa under the project
EP/S022945/1. ME is also partially funded by the French Alternative
Energies and Atomic Energy Commission (CEA). TP received support from
the EPSRC programme grant EP/W026899/1, the Leverhulme RPG-2021-238
and EPSRC grant EP/X030067/1. All of this support is gratefully
acknowledged.

\bibliography{Literatur.bib} 

\end{document}